\documentclass[11pt,reqno]{amsart}

\usepackage{amsmath,amssymb,amsthm,mathtools}
\usepackage{bm}
\usepackage{mathrsfs}
\usepackage{microtype}
\usepackage{geometry}
\usepackage{cite}
\usepackage{xcolor}
\usepackage{hyperref}
\hypersetup{
  colorlinks=true,
  linkcolor=blue,
  citecolor=blue,
  urlcolor=blue,
  filecolor=blue,
  pdftitle={Stability of electrokinetic Couette states in the Poisson-Nernst-Planck-Navier-Stokes system},
  pdfauthor={Jiaojiao Pan and Luqi Wang}
}

\makeatletter

\def\@seccntformat#1{\csname the#1\endcsname\quad}

\def\section{\@startsection{section}{1}
  \z@{-1.45\linespacing\@plus-.45\linespacing\@minus-.15\linespacing}
  {.62\linespacing}
  {\normalfont\Large\bfseries\raggedright}}
\def\subsection{\@startsection{subsection}{2}
  \z@{-1.05\linespacing\@plus-.35\linespacing\@minus-.12\linespacing}
  {.48\linespacing}
  {\normalfont\large\bfseries\raggedright}}
\def\subsubsection{\@startsection{subsubsection}{3}
  \z@{-.8\linespacing\@plus-.3\linespacing\@minus-.1\linespacing}
  {.38\linespacing}
  {\normalfont\normalsize\bfseries\raggedright}}

\def\@settitle{
  \begin{center}
    \vspace*{.02in}
    {\LARGE\bfseries\@title\par}
  \end{center}
}

\def\@setauthors{
  \begin{center}
    \vspace{1.15em}
    {\large\authors\par}
  \end{center}
}

\renewenvironment{abstract}{
  \global\setbox\abstractbox=\vtop\bgroup
  \hsize=.88\textwidth
  \normalfont\normalsize
  \noindent\ignorespaces
}{
  \par\egroup
  \ifx\@setabstract\relax \@setabstracta \fi
}

\def\@setabstract{\@setabstracta \global\let\@setabstract\relax}
\def\@setabstracta{
  \ifvoid\abstractbox\else
    \vspace{2.05em}
    \begin{center}{\bfseries Abstract}\end{center}
    \vspace{.18em}
    \noindent\hfil\box\abstractbox\hfil\par
  \fi
}

\def\@adminfootnotes{}
\def\@maketitle{
  \normalfont\normalsize
  \@mkboth{\@nx\shortauthors}{\@nx\shorttitle}
  \@settitle
  \ifx\@empty\authors\else \@setauthors \fi
  \@setabstract
  \normalsize
  \vspace{1.05em}
  \ifx\@empty\@keywords\else
    \noindent{\bfseries Keywords.}\enspace\@keywords\par
  \fi
  \vspace{.30em}
  \ifx\@empty\@subjclass\else
    \noindent{\bfseries 2020 Mathematics Subject Classification.}\enspace\@subjclass\par
  \fi
  \vspace{1.20em}
}
\makeatother

\numberwithin{equation}{section}

\newtheorem{theorem}{Theorem}[section]
\newtheorem{proposition}[theorem]{Proposition}
\newtheorem{lemma}[theorem]{Lemma}

\theoremstyle{remark}
\newtheorem{remark}{Remark}[section]

\newcommand{\R}{\mathbb{R}}
\newcommand{\C}{\mathbb{C}}
\newcommand{\T}{\mathbb{T}}
\newcommand{\e}{\mathrm{e}}
\newcommand{\ii}{\mathrm{i}}
\newcommand{\dd}{\,\mathrm{d}}
\newcommand{\Div}{\operatorname{div}}
\newcommand{\Rey}{\mathrm{Re}}
\newcommand{\Pdiv}{\mathcal{P}}
\newcommand{\AS}{\mathcal{A}_{\mathrm S}}
\newcommand{\AD}{\mathcal{A}_{\mathrm D}}
\newcommand{\Lzero}{\mathcal{L}_0}
\newcommand{\Lzeta}{\mathcal{L}_{\zeta}}
\newcommand{\LC}{\mathcal{L}_{\mathrm C}}
\newcommand{\Li}{\mathcal{L}_{\mathrm{ion}}}
\newcommand{\Hphase}{\mathscr{H}}
\newcommand{\Dphase}{\mathscr{D}}
\newcommand{\Xtheta}{\mathscr{X}_{\theta}}
\newcommand{\Nzero}{\mathcal{N}_0}
\newcommand{\Kop}{\mathcal{K}}
\newcommand{\Nzeta}{\mathcal{N}_{\zeta}}
\newcommand{\Id}{\mathcal{I}}
\newcommand{\norm}[1]{\left\lVert #1\right\rVert}

\title[Stability of electrokinetic Couette states]{Stability of electrokinetic Couette states in the Poisson--Nernst--Planck--Navier--Stokes system}
\author[J. Pan and L. Wang]{Jiaojiao Pan\textsuperscript{*}\hspace{4.5em}Luqi Wang\textsuperscript{\(\dagger\)}}
\date{}

\subjclass[2020]{76E25, 76E05, 35B35, 35Q35, 76D05}
\keywords{Poisson--Nernst--Planck--Navier--Stokes system; Electrokinetic Couette states; Nonlinear stability; Wall potentials; Enhanced dissipation}

\begin{document}
\pagestyle{plain}
\setlength{\skip\footins}{8pt plus 2pt minus 2pt}
\renewcommand{\footnoterule}{
  \kern -3pt
  \hrule width .30\textwidth height .4pt
  \kern 2.6pt
}
\raggedbottom

\begin{abstract}
In this paper, we study a two-dimensional Poisson--Nernst--Planck--Navier--Stokes system in a periodic channel driven by wall motion and an imposed tangential electric field, with two ionic species that may have unequal diffusivities.  At the electroneutral Couette state, a diffusivity-weighted ionic energy combines with the triangular structure of the linearized operator to yield exponential linear stability for fixed \((A,E_0)\) and all positive diffusivities.  Small-data nonlinear exponential stability is then obtained on a complex interpolation space adapted to the graph domain of the generator by combining analytic-semigroup smoothing with quadratic estimates.  For unequal diffusivities, the nonzero streamwise modes of the linearized ionic subsystem satisfy a bounded-channel enhanced-dissipation estimate at rate \(D_{\min}^{1/3}|A|^{2/3}\) under an explicit strong-shear condition.  When \(D_+=D_-\), a Fourier-mode factorization separates the common Couette advection--diffusion operator from a contractive drift--reaction semigroup, so the same scalar mixing rate is retained without any smallness condition on the electrostatic coupling.  We also construct an exact Poisson--Boltzmann/electroosmotic Couette family for prescribed wall potentials.  For sufficiently small wall-potential amplitude, its linearized generator is a small graph-domain perturbation of the electroneutral generator, which yields linear and nonlinear exponential stability on the same interpolation scale.
\end{abstract}

\maketitle

\begingroup
\renewcommand{\thefootnote}{\fnsymbol{footnote}}
\footnotetext[1]{School of Mathematics, Nanjing University, Nanjing 210093, China, panjiaojiao.math@gmail.com}
\footnotetext[2]{School of Mathematics, Nanjing University, Nanjing 210093, China, wangluqi@nju.edu.cn}
\endgroup

\thispagestyle{plain}

\section{Introduction}

The Poisson--Nernst--Planck--Navier--Stokes (PNPNS) system describes ionic electrodiffusion in a viscous incompressible fluid coupled to a self-consistent electrostatic potential.  An early weak-solution framework for the coupled Navier--Stokes--Nernst--Planck--Poisson system in the presence of an externally applied electric field was established by Schmuck \cite{Schmuck2009}.  Bothe, Fischer and Saal \cite{BotheFischerSaal2014} proved local strong well-posedness in bounded domains in arbitrary space dimension and, in two dimensions, global strong well-posedness together with exponential convergence to equilibrium.  Global two-dimensional results under blocking or selective ionic boundary conditions were obtained by Constantin and Ignatova \cite{ConstantinIgnatova2019}.  Constantin, Ignatova and Lee studied three-dimensional perturbations near equilibrium in \cite{ConstantinIgnatovaLeeNear2022}, treated solutions far from equilibrium in \cite{ConstantinIgnatovaLeeFar2021}, developed a weak-current stability theory for nonequilibrium steady states in \cite{ConstantinIgnatovaLeePhysD2022}, and analyzed interior electroneutrality in the small-Debye-length regime in \cite{ConstantinIgnatovaLeeElectroneutral2021}.  Lee \cite{Lee2023} proved global regularity in three-dimensional bounded domains with mixed ionic boundary conditions and Robin data for the potential.  For the Navier--Stokes coupling, that result is conditional on velocity regularity.  On periodic domains, exponential nonlinear stability of constant steady states, together with a large-data two-dimensional result for general collections of species and diffusivities, was established by Abdo and Ignatova \cite{AbdoIgnatova2024}.  Zhai and Wu \cite{ZhaiWu2024} studied well-posedness and decay rates for the PNPNS initial-value problem, while Zhang and Zhang \cite{ZhangZhang2024} treated the quasi-neutral limit in smooth bounded domains and Shen and Wang \cite{ShenWang2022} considered a compressible PNPNS Cauchy problem with a doping profile.  For the Nernst--Planck--Stokes system with nonequilibrium Dirichlet data, Lee established long-time dynamical bounds and a finite-dimensional global attractor in \cite{Lee2024}, and later proved asymptotic stability of sufficiently weak nonequilibrium steady states in \cite{Lee2025}.  More recently, the small-convection limit from the Nernst--Planck--Navier--Stokes system to the corresponding Nernst--Planck--Stokes system under Dirichlet potential boundary data was quantified by Du, Wang and Zhang \cite{DuWangZhang2026}.

Classical electrokinetic theory provides the physical background for the wall-potential and electric-double-layer terms used below.  A standard continuum treatment of electroosmosis and related physicochemical transport mechanisms was given by Probstein \cite{Probstein1994}.  Hunter \cite{Hunter1981} developed the relation between zeta potential, interfacial charge and the electric double layer from the viewpoint of colloid science.  Rubinstein and Zaltzman \cite{RubinsteinZaltzman2000} showed that nonequilibrium electroosmosis at a permselective interface can drive convective instability.  They later derived an electro-osmotic slip description and used it in a linear electroconvective stability analysis in \cite{ZaltzmanRubinstein2007}.  These membrane-driven mechanisms differ from the reservoir-driven Couette configuration considered here, yet they illustrate how wall potential and double-layer structure can modify hydrodynamic behavior.

A central feature of the present problem is the wall-driven parallel shear in the reference velocity.  General nonequilibrium steady states that may have nonzero fluid velocity were constructed by Constantin, Ignatova and Lee \cite{ConstantinIgnatovaLeePhysD2022}, whereas their Section~4 weak-current stability theorem concerned one-dimensional steady current solutions with the fluid at rest.  The stability analysis in Lee \cite{Lee2025} concerned Stokes rather than Navier--Stokes coupling.  Neither stability result therefore covers a wall-driven Couette reference flow.

On the hydrodynamic side, Romanov \cite{Romanov1973} established the classical spectral stability of plane Couette flow, while Heck, Kim and Kozono \cite{HeckKimKozono2009} proved exponential stability under small tangentially periodic perturbations for fixed positive viscosity.  Nonlinear inviscid damping near planar Couette flow for the two-dimensional Euler equation on \(\mathbb T\times\mathbb R\) was proved by Bedrossian and Masmoudi \cite{BedrossianMasmoudi2015}.  In the viscous unbounded-cross-stream setting, Bedrossian, Masmoudi and Vicol \cite{BedrossianMasmoudiVicol2016} identified enhanced dissipation and inviscid damping near two-dimensional Couette flow, and Masmoudi and Zhao \cite{MasmoudiZhao2020} refined the enhanced-dissipation analysis in critical regularity.  Boundary effects require separate arguments in finite channels: Ionescu and Jia \cite{IonescuJia2020} proved inviscid damping and asymptotic stability for Euler, while Bedrossian and He \cite{BedrossianHe2020} obtained no-slip boundary-layer estimates for the linearized Navier--Stokes equations.  Wei, Zhang and Zhao \cite{WeiZhangZhao2018} established linear inviscid damping for monotone shear flows, and Wei and Zhang \cite{WeiZhang2021,WeiZhang2023,WeiZhang2026} developed transition-threshold and enhanced-dissipation results for viscous Couette flow, including a finite-channel Navier-slip result.  For passive scalars, Bedrossian and Coti Zelati \cite{BedrossianCotiZelati2017} quantified enhanced dissipation and hypoelliptic regularization for shear flows, while Albritton, Beekie and Novack \cite{AlbrittonBeekieNovack2022} obtained the \(\nu^{-1/3}\) enhanced-dissipation time scale for monotone shears in bounded channels with homogeneous Dirichlet data.

The paper establishes linear exponential stability of the electroneutral Couette state for unequal positive ionic diffusivities and small-data nonlinear exponential stability on a complex interpolation space adapted to the graph domain of the linearized generator.  It also gives an enhanced-dissipation estimate for the nonzero streamwise ionic modes under an explicit strong-shear condition.  When the diffusivities coincide, an exact Fourier-mode factorization removes the electrostatic-coupling smallness condition.  A Poisson--Boltzmann/electroosmotic Couette family is constructed for prescribed wall potentials and is shown to be stable for sufficiently small wall-potential amplitude.

For the electroneutral reference state, the ionic concentrations are constant and the velocity is \(Ay\bm e_1\).  The ionic block of the linearization admits a coercive diffusivity-weighted \(L^2\) identity for arbitrary \(D_+,D_->0\).  Because constant base concentrations remove the linear velocity forcing from the ionic equations, while the charge perturbation acts on the fluid equation, the full linearized operator has a triangular block structure.  Combining ionic decay with the Couette semigroup yields exponential linear stability.  The same generator then supports the nonlinear argument on the interpolation scale between the base space and the graph domain of the generator.

Quantitative mixing is obtained by writing the unequal-diffusivity ionic system as two scalar Couette advection--diffusion generators coupled by a constant Poisson reaction matrix.  The bounded-channel estimate of Albritton, Beekie and Novack \cite{AlbrittonBeekieNovack2022}, together with a Duhamel argument, gives decay at rate \(D_{\min}^{1/3}|A|^{2/3}\) when the shear dominates the zero-order ionic coupling.  The resulting sufficient condition displays explicitly the dependence on \(D_\pm\), \(\lambda_{\mathrm D}\) and \(c_0/\lambda_{\mathrm D}^2\).  In the equal-diffusivity case, a Fourier-mode factorization separates the common Couette advection--diffusion operator from a contractive drift--reaction semigroup, so the scalar mixing rate is retained without any smallness condition on the electrostatic coupling.

For prescribed wall potentials, the one-dimensional Poisson--Boltzmann profile determines an electroosmotic correction to the affine Couette velocity through the streamwise momentum balance.  When the wall-potential amplitude is small, the corresponding linearized operator is a small graph-domain perturbation of the electroneutral generator.  A resolvent perturbation argument preserves exponential stability, and the nonlinear stability proof proceeds on the same interpolation scale.

The rest of the paper is organized as follows.  Section~\ref{sec:main} formulates the PNPNS model, fixes the functional setting and states the principal results.  Section~\ref{sec:linear-proof} constructs the exact electroosmotic Couette state and establishes the linear estimates, including the graph-norm equivalence used later.  Section~\ref{sec:nonlinear-proof} proves nonlinear stability by combining the quadratic mapping property with analytic-semigroup smoothing in a weighted fixed-point argument.  Section~\ref{sec:ED-proof} develops the coupled ionic enhanced-dissipation estimate and the equal-diffusivity factorization, and Section~\ref{sec:PB-proof} treats weak Poisson--Boltzmann layers.  Appendix~\ref{app:perturbation} contains the sectorial perturbation lemma used in the weak-layer stability argument of Section~\ref{sec:PB-proof}.

\section{Preliminaries and main results}\label{sec:main}

\subsection{Model and exact steady states}

Write \(\T_L:=\R/(L\mathbb Z)\) for the one-dimensional torus of period \(L>0\).  After nondimensionalization by the channel half-gap, let
\[
 \Omega:=\T_{2\pi}\times(-1,1),
 \qquad (x,y)\in\Omega.
\]
We write \(\bm u=(u_1,u_2)\) for the velocity, \(p\) for the pressure, \(c_+\) and \(c_-\) for the ionic concentrations and \(\phi\) for the self-consistent potential.  Each field is periodic in \(x\).  With \(\bm e_1=(1,0)\), the total electric field is \(E_0\bm e_1-\nabla\phi\), where the constant vector \(E_0\bm e_1\) represents an imposed tangential electromotive forcing.  Hence the affine total potential \(-E_0x+\phi\) itself need not be single-valued on the torus.  We take the pressure to be periodic and impose no independent mean pressure gradient.  The governing equations are
\begin{equation}\label{eq:PNPNS}
\left\{
\begin{aligned}
 \partial_t\bm u+(\bm u\cdot\nabla)\bm u+\nabla p
 &=\nu\Delta\bm u+\gamma \varrho\bigl(E_0\bm e_1-\nabla\phi\bigr),\\
 \Div\bm u&=0,\\
 \partial_t c_+ +\bm u\cdot\nabla c_+
 &=D_+\Div\!\left(\nabla c_+ +c_+\nabla\phi-E_0c_+\bm e_1\right),\\
 \partial_t c_- +\bm u\cdot\nabla c_-
 &=D_-\Div\!\left(\nabla c_- -c_-\nabla\phi+E_0c_-\bm e_1\right),\\
 -\lambda_{\mathrm D}^2\Delta\phi&=\varrho:=c_+-c_-.
\end{aligned}
\right.
\end{equation}
Here \(\nu,D_+,D_-,\lambda_{\mathrm D},\gamma>0\), \(E_0\in\R\), and \(\varrho=c_+-c_-\) is the signed ionic charge density in the chosen nondimensionalization.  The boundary data are
\begin{equation}\label{eq:boundary}
\left\{
\begin{aligned}
 \bm u(x,\pm1,t)&=(U_\pm,0),\\
 \phi(x,\pm1,t)&=\zeta_\pm,\\
 c_+(x,\pm1,t)&=c_0\e^{-\zeta_\pm},\\
 c_-(x,\pm1,t)&=c_0\e^{\zeta_\pm},
\end{aligned}
\right.
\end{equation}
where \(c_0>0\). 
At the plates, these reservoir values impose \(\log(c_\pm/c_0)\pm\phi=0\) and therefore yield homogeneous Dirichlet conditions for perturbations.  A nonzero tangential field \(E_0\) can still drive a tangential current through the resulting equilibrium profile.

Set
\[
 U_{\mathrm C}(y):=Ay+B,
 \qquad
 A:=\frac{U_+-U_-}{2},
 \qquad
 B:=\frac{U_++U_-}{2},
\]
and
\[
 \ell_\zeta(y):=\frac{1+y}{2}\zeta_+
 +\frac{1-y}{2}\zeta_-,
 \qquad
 \delta_\zeta:=|\zeta_+|+|\zeta_-|.
\]
For these boundary data, the associated Poisson--Boltzmann problem is
\begin{equation}\label{eq:PB}
\left\{
\begin{aligned}
 -\lambda_{\mathrm D}^2\phi_*''
 &=c_0\e^{-\phi_*}-c_0\e^{\phi_*},
 &&-1<y<1,\\
 \phi_*(\pm1)&=\zeta_\pm.
\end{aligned}
\right.
\end{equation}

We first record the exact reference family used throughout the analysis.

\begin{proposition}[Exact electroosmotic Couette state]\label{prop:exact}
For every \(\lambda_{\mathrm D},c_0>0\) and \(\zeta_\pm\in\R\), problem \eqref{eq:PB} has a unique smooth solution.  With
\[
 \chi:=\frac{\gamma\lambda_{\mathrm D}^2}{\nu},
\]
the system \eqref{eq:PNPNS}--\eqref{eq:boundary} admits the steady state
\begin{equation}\label{eq:base-state}
\left\{
\begin{aligned}
 c_{+,*}(y)&=c_0\e^{-\phi_*(y)},\\
 c_{-,*}(y)&=c_0\e^{\phi_*(y)},\\
 \varrho_*(y)&=c_{+,*}(y)-c_{-,*}(y),\\
 \bm u_*(y)&=U_*(y)\bm e_1,\\
 U_*(y)&=U_{\mathrm C}(y)
 +\chi E_0\bigl(\phi_*(y)-\ell_\zeta(y)\bigr),\\
 p_*(y)&=\gamma\bigl(c_{+,*}(y)+c_{-,*}(y)\bigr)+p_0,
 \qquad p_0\in\R.
\end{aligned}
\right.
\end{equation}
\end{proposition}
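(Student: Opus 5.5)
The proof has two independent parts: well-posedness of the Poisson--Boltzmann problem \eqref{eq:PB}, and a direct verification that \eqref{eq:base-state} solves \eqref{eq:PNPNS}--\eqref{eq:boundary}.

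\emph{Solvability of \eqref{eq:PB}.} Write the equation as \(\lambda_{\mathrm D}^2\phi''=f(\phi)\) with \(f(s):=2c_0\sinh s\), which is smooth and strictly increasing. I would use the variational method. Minimize
\[
J(\phi)=\int_{-1}^1\Bigl(\tfrac{\lambda_{\mathrm D}^2}{2}|\phi'|^2+2c_0\cosh\phi\Bigr)\dd y
\]
over \(\ell_\zeta+H^1_0(-1,1)\). In one dimension \(H^1\subset L^\infty\), so \(J\) is finite on this affine space. It is also strictly convex, coercive and weakly lower semicontinuous, so a unique minimizer exists.

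An alternative existence route is truncation. By the maximum principle any solution satisfies \(\min(\zeta_\pm,0)\le\phi\le\max(\zeta_\pm,0)\). One can therefore replace \(f\) by a bounded monotone truncation and apply Schauder's theorem, or the method of sub- and supersolutions with the constants \(\min(\zeta_\pm,0)\) and \(\max(\zeta_\pm,0)\).

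Uniqueness follows from monotonicity. If \(w=\phi_1-\phi_2\), then \(\lambda_{\mathrm D}^2w''=(f(\phi_1)-f(\phi_2))\) with \(w(\pm1)=0\). Testing with \(w\) gives \(\lambda_{\mathrm D}^2\|w'\|^2+\int (f(\phi_1)-f(\phi_2))w=0\), and both terms are nonnegative, so \(w=0\). Smoothness follows by bootstrapping the ODE.

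\emph{Ionic equations.} Every field in \eqref{eq:base-state} depends only on \(y\), and \(\bm u_*=U_*\bm e_1\). Hence \(\Div\bm u_*=0\) and \(\bm u_*\cdot\nabla c_{\pm,*}=U_*\partial_x c_{\pm,*}=0\). The \(x\)-divergence of the constant term \(E_0c_\pm\bm e_1\) is \(\partial_x(E_0 c_\pm)=0\). The remaining flux reduces to \(\partial_y(c_+'+c_+\phi_*')\), and \(c_+'=-\phi_*'c_+\) makes it vanish identically; the same holds for \(c_-\).

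\emph{Poisson equation and boundary data.} The Poisson equation is exactly \eqref{eq:PB}, since \(\varrho_*=c_0(\e^{-\phi_*}-\e^{\phi_*})\). The conditions \(\phi_*(\pm1)=\zeta_\pm\) give the ion boundary values directly.

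\emph{Momentum equation.} Since \((\bm u_*\cdot\nabla)\bm u_*=U_*\partial_xU_*\bm e_1=0\), only the balance of forces remains.

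In the \(y\)-component we need \(p_*'=-\gamma\varrho_*\phi_*'\). This holds because
\[
\bigl(c_{+,*}+c_{-,*}\bigr)'=-\phi_*'\bigl(c_{+,*}-c_{-,*}\bigr)=-\phi_*'\varrho_*,
\]
which is the standard osmotic pressure identity. The resulting \(p_*\) depends only on \(y\), so it is periodic in \(x\) with no mean gradient.

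In the \(x\)-component we need \(\nu U_*''+\gamma E_0\varrho_*=0\). Using \(\varrho_*=-\lambda_{\mathrm D}^2\phi_*''\) and the fact that \(U_{\mathrm C}\) and \(\ell_\zeta\) are affine, this becomes
\[
\nu\chi E_0\phi_*''-\gamma\lambda_{\mathrm D}^2E_0\phi_*''=0,
\]
which holds precisely because \(\chi=\gamma\lambda_{\mathrm D}^2/\nu\).

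Finally, \(\phi_*-\ell_\zeta\) vanishes at \(y=\pm1\), so \(U_*(\pm1)=U_{\mathrm C}(\pm1)=U_\pm\). Thus the velocity boundary values hold.

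The only genuinely nontrivial step is existence for the nonlinear boundary value problem \eqref{eq:PB}, and the convex variational argument settles it. The rest is bookkeeping: the signs in the pressure identity and checking that the choice of \(\chi\) cancels the electroosmotic body force.
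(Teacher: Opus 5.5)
Your proposal is correct and follows the paper's proof essentially step for step: the same convex functional (your \(2c_0\cosh\phi\) is the paper's \(c_0\e^{-\phi}+c_0\e^{\phi}\)) minimized over \(\ell_\zeta+H_0^1(-1,1)\), then bootstrapping for smoothness. The verifications also match: vanishing normal fluxes, the osmotic pressure identity \(p_*'=-\gamma\varrho_*\phi_*'=\gamma(c_{+,*}+c_{-,*})'\), and the cancellation \(\nu\chi=\gamma\lambda_{\mathrm D}^2\) in the streamwise balance. Your uniqueness argument by monotonicity of \(\sinh\) is the first-order form of the paper's strict-convexity argument. The only thing you leave implicit is the routine remark that the minimizer's Euler--Lagrange equation is the weak form of \eqref{eq:PB}.
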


Formula \eqref{eq:base-state} for the electroosmotic correction follows directly from the Poisson equation and requires no linearization of \eqref{eq:PB}.

\subsection{Functional setting}

For \(\zeta_+=\zeta_-=0\), Proposition~\ref{prop:exact} gives the uniform state
\begin{equation}\label{eq:uniform-base}
 \phi_*=0,
 \qquad c_{+,*}=c_{-,*}=c_0,
 \qquad \varrho_*=0,
 \qquad \bm u_*=U_{\mathrm C}(y)\bm e_1.
\end{equation}
To remove the constant mean wall speed \(B\), set
\[
 \widetilde x:=x-Bt,\qquad \widetilde y:=y,\qquad \widetilde t:=t,
\]
and, for each scalar field \(f\in\{p,c_+,c_-,\phi\}\), define
\[
 \widetilde f(\widetilde x,y,t):=f(\widetilde x+Bt,y,t),
 \qquad
 \widetilde{\bm u}(\widetilde x,y,t)
 :=\bm u(\widetilde x+Bt,y,t)-B\bm e_1.
\]
Under this change, \(U_{\mathrm C}(y)=Ay+B\) becomes \(Ay\), while the perturbation equations retain the same form.  Dropping tildes from now on, we set
\[
 \bm U(y):=Ay\bm e_1,
 \qquad
 \Rey:=\frac{|A|}{\nu},
 \qquad
 D_{\min}:=\min\{D_+,D_-\},
 \qquad
 D_{\max}:=\max\{D_+,D_-\}.
\]

All Sobolev spaces below are periodic in \(x\).  Here \(H_0^1(\Omega)\) denotes the subspace with zero trace on the plates \(y=\pm1\), together with periodicity in \(x\).  Let \(L^2_\sigma(\Omega)\) be the closed subspace of \(L^2(\Omega;\R^2)\) obtained as the \(L^2\)-closure of smooth \(2\pi\)-periodic divergence-free vector fields whose normal component vanishes on the plates, and let \(\Pdiv\) denote the corresponding Leray projection.  Define the Dirichlet Stokes operator \(\AS\) and the positive Dirichlet Laplacian \(\AD\) by
\begin{align*}
 \AS\bm v&:=-\Pdiv\Delta\bm v,
\qquad \mathcal D(\AS)=H^2(\Omega;\R^2)\cap H_0^1(\Omega;\R^2)\cap L^2_\sigma(\Omega),\\
 \AD f&:=-\Delta f,
\qquad \mathcal D(\AD)=H^2(\Omega)\cap H_0^1(\Omega).
\end{align*}
We shall also use the Dirichlet Poisson solution operator
\[
 \Kop:=\lambda_{\mathrm D}^{-2}\AD^{-1},
 \qquad
 -\lambda_{\mathrm D}^2\Delta(\Kop f)=f,
 \qquad (\Kop f)|_{y=\pm1}=0.
\]
Set the base and graph spaces as
\[
 \Hphase:=L^2_\sigma(\Omega)\times L^2(\Omega)\times L^2(\Omega),
 \qquad
 \Dphase:=\mathcal D(\AS)\times\mathcal D(\AD)^2.
\]
On \(\Hphase\) we use the Hilbert product norm
\[
 \|(\bm v,f,g)\|_{\Hphase}^2
 :=\|\bm v\|_{L^2}^2+\|f\|_{L^2}^2+\|g\|_{L^2}^2.
\]
We use the same Euclidean product convention on \(L^2(\Omega)^2\).
We endow \(\Dphase\) with the norm
\[
 \|(\bm v,f,g)\|_{\Dphase}
 :=\|\bm v\|_{H^2}+\|f\|_{H^2}+\|g\|_{H^2},
\]
which is equivalent to the product of the Stokes and Dirichlet-Laplacian graph norms.
For spectral and analytic-semigroup arguments, we use the complexifications of these spaces without changing the notation.  The linear semigroups and nonlinear maps preserve the real subspaces, and the nonlinear stability statements below concern real-valued initial data.  Let \([X_0,X_1]_\theta\) denote the complex interpolation space of a compatible Banach couple \((X_0,X_1)\).  We write evolution equations in the form \(\partial_t Z=\mathcal L Z\).  
We call \(\mathcal L\) sectorial if \(-\mathcal L+\omega\Id\) is sectorial of angle less than \(\pi/2\) for some real shift \(\omega\).  For \(\theta\in(3/4,1)\), set
\[
 \Xtheta:=[\Hphase,\Dphase]_\theta.
\]
To record the Sobolev regularity inherited from this interpolation space, let
\[
 \mathscr E_0:=L^2(\Omega;\C^2)\times L^2(\Omega)^2,
 \qquad
 \mathscr E_1:=H^2(\Omega;\C^2)\times H^2(\Omega)^2.
\]
Both embeddings \(\Hphase\hookrightarrow\mathscr E_0\) and \(\Dphase\hookrightarrow\mathscr E_1\) are continuous.  Functoriality of complex interpolation, together with the standard identity \([L^2,H^2]_\theta=H^{2\theta}\) on the periodic strip, yields
\begin{equation}\label{eq:Xembed}
 \Xtheta\hookrightarrow
 H^{2\theta}(\Omega;\C^2)\times H^{2\theta}(\Omega)^2
 \hookrightarrow C^0(\overline\Omega;\C^2)\times C^0(\overline\Omega)^2.
\end{equation}
For the interpolation facts, see \cite[Chapters~2 and~4]{BerghLofstrom1976}.  Since \(\Dphase\) is dense in \(\Hphase\), it is also dense in \(\Xtheta\).  Moreover, \(2\theta>1/2\), so the Sobolev trace map is continuous on \(H^{2\theta}(\Omega)\).  Hence the zero plate traces of elements of \(\Dphase\) pass to \(\Xtheta\) by density.  The velocity component remains divergence free because \(L^2_\sigma(\Omega)\) is closed in \(L^2(\Omega;\C^2)\).  Thus \(\Xtheta\) already incorporates the homogeneous perturbation traces and the divergence constraint.

\begin{remark}[Choice of the interpolation exponent]
We choose \(\theta>3/4\), hence \(2\theta>3/2\), so that the embedding in \eqref{eq:Xembed} and the elementary product estimates used below control
\[
 \bm v\cdot\nabla\bm v,
 \qquad
 \bm v\cdot\nabla a_\pm,
 \qquad
 \Div(a_\pm\nabla\Kop(a_+-a_-))
\]
in \(L^2(\Omega)\) without endpoint trace or multiplication arguments.  The same choice is used for the weak Poisson--Boltzmann problem so that both nonlinear stability theorems are posed on one interpolation scale.
\end{remark}

Unless a subscript indicates otherwise, \(C>0\) denotes a generic constant that may change from line to line and may depend on the fixed physical parameters, the channel, and the fixed interpolation exponents used in that estimate.  Constants such as \(C_{\mathrm{ED}}\), \(d_{\mathrm{ED}}\), and \(C_{\mathrm{PB}}\) are kept fixed within the estimates in which they are introduced.  We write \(C_{\mathrm{emb}}\) for a fixed norm of the embedding
\[
 \Xtheta\hookrightarrow
 L^\infty(\Omega;\C^2)\times L^\infty(\Omega)^2
\]
that follows from \eqref{eq:Xembed}.

For a scalar or vector field \(f\), define its streamwise mean and nonzero-mode part componentwise by
\[
 (\mathscr P_0f)(y):=\frac{1}{2\pi}\int_0^{2\pi}f(x,y)\dd x,
 \qquad
 \mathscr P_{\neq 0}:=\Id-\mathscr P_0,
 \qquad f_{\neq 0}:=\mathscr P_{\neq 0}f.
\]
Both projections are orthogonal in \(L^2\) and preserve the Dirichlet trace whenever the trace is defined.

\subsection{Main results}

We begin with fixed-parameter stability of the electroneutral state \eqref{eq:uniform-base}.

\begin{theorem}[Linear exponential stability]\label{thm:linear-main}
Fix
\[
 A,E_0\in\R,
 \qquad
 \nu,D_+,D_-,\lambda_{\mathrm D},\gamma,c_0>0.
\]
The linearization of \eqref{eq:PNPNS} about \eqref{eq:uniform-base} defines a sectorial operator \(\Lzero\) on \(\Hphase\) with domain \(\Dphase\).  There exist \(M_0\ge1\) and \(\omega_0>0\), depending on the fixed parameters, such that
\begin{equation}\label{eq:main-linear-decay}
 \norm{\e^{t\Lzero}}_{\mathcal L(\Hphase)}
 \le M_0\e^{-\omega_0t},
 \qquad t\ge0.
\end{equation}
Hence the uniform electrokinetic Couette state is linearly exponentially stable for every fixed finite \(\Rey\) and for arbitrary positive ionic diffusivities.
\end{theorem}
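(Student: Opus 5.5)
We first write the linearization explicitly. With perturbation \((\bm v,a_+,a_-)\) about \eqref{eq:uniform-base}, and \(\psi:=\Kop(a_+-a_-)\), the ionic equations read
\[
 \partial_t a_\pm + Ay\partial_x a_\pm = D_\pm\bigl(\Delta a_\pm \pm c_0\Delta\psi \mp E_0\partial_x a_\pm\bigr)
 = D_\pm\Delta a_\pm \mp \frac{D_\pm c_0}{\lambda_{\mathrm D}^2}(a_+-a_-) \mp D_\pm E_0\partial_x a_\pm .
\]
The base concentrations are constant, so there is no \(\bm v\cdot\nabla c_{\pm,*}\) term. The fluid equation is
\[
 \partial_t\bm v = -\nu\AS\bm v-\Pdiv\bigl(Ay\partial_x\bm v+v_2A\bm e_1\bigr)+\gamma E_0\Pdiv\bigl((a_+-a_-)\bm e_1\bigr).
\]
Since \(\varrho_*=0\), the term \(-\gamma\varrho_*\nabla\psi\) is absent. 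Hence \(\Lzero\) is block upper triangular:
\[
 \Lzero=\begin{pmatrix}\LC & \mathcal B\\ 0 & \Li\end{pmatrix},
\]
where \(\LC\) is the linearized Couette (Orr--Sommerfeld) operator on \(L^2_\sigma\), \(\Li\) is the ionic block above, and \(\mathcal B(a_+,a_-)=\gamma E_0\Pdiv((a_+-a_-)\bm e_1)\) is bounded \(L^2\to L^2_\sigma\).

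\emph{Sectoriality.} The diagonal operator \(\mathrm{diag}(-\nu\AS,D_+\Delta,D_-\Delta)\) is self-adjoint, negative and sectorial on \(\Hphase\) with domain \(\Dphase\). All remaining terms are first order: \(Ay\partial_x\), \(E_0\partial_x\), \(\Pdiv(v_2A\bm e_1)\), \(\mathcal B\), and the zero-order reaction. They are relatively bounded with respect to the diagonal part with relative bound zero, by the interpolation \(\|\nabla f\|\le\epsilon\|\Delta f\|+C_\epsilon\|f\|\). The standard perturbation theorem for analytic generators then gives that \(\Lzero\) is sectorial with domain \(\Dphase\). We may also cite the perturbation lemma of Appendix~\ref{app:perturbation}.

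\emph{Ionic decay.} We use the weighted energy \(\mathcal E=\frac{1}{D_+}\|a_+\|^2+\frac{1}{D_-}\|a_-\|^2\). The advection terms \(Ay\partial_x\) and \(E_0\partial_x\) are skew on \(H_0^1\), periodic in \(x\), so they drop out. The weights \(1/D_\pm\) cancel the prefactors, and the remaining reaction contribution is \(-\frac{c_0}{\lambda_{\mathrm D}^2}\|a_+-a_-\|^2\le0\). The weighting is exactly what makes the cross terms symmetric for unequal diffusivities. This yields
\[
 \tfrac12\tfrac{d}{dt}\mathcal E\le-\|\nabla a_+\|^2-\|\nabla a_-\|^2\le -\lambda_1 D_{\min}\,\mathcal E,
\]
with \(\lambda_1=\pi^2/4\) the Poincaré constant of \((-1,1)\). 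So \(\|\e^{t\Li}\|\le (D_{\max}/D_{\min})^{1/2}\e^{-\lambda_1D_{\min}t}\).

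\emph{Fluid decay and assembly.} For \(\LC\) we use the exponential stability of plane Couette flow in the channel for fixed \(\nu>0\), which is Romanov's spectral stability combined with the result of Heck--Kim--Kozono. This gives \(\|\e^{t\LC}\|\le M_C\e^{-\omega_Ct}\). A fully self-contained alternative is the plain energy estimate for \(\LC\): testing with \(\bm v\) gives \(\tfrac12\tfrac{d}{dt}\|\bm v\|^2=-\nu\|\nabla\bm v\|^2-A\int v_1v_2\), which only decays for small \(\Rey\). Since the theorem covers every finite \(\Rey\), we invoke the Couette result, and this is the step we expect to be the main obstacle to make rigorous in the present functional setting. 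In particular we must check that the cited result is stated on \(L^2_\sigma\) with Dirichlet data and at arbitrary \(\Rey\). Otherwise we argue directly: \(\LC\) has compact resolvent, so analyticity reduces decay to a spectral bound, and Romanov's theorem excludes spectrum in \(\{\Re\lambda\ge0\}\).

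Given both block estimates, Duhamel gives
\[
 \e^{t\Lzero}(\bm v_0,a_0)=\Bigl(\e^{t\LC}\bm v_0+\int_0^t\e^{(t-s)\LC}\mathcal B\e^{s\Li}a_0\dd s,\ \e^{t\Li}a_0\Bigr).
\]
The convolution of two exponentials is bounded by \(C t\e^{-\min(\omega_C,\lambda_1D_{\min})t}\). This yields \eqref{eq:main-linear-decay} with any \(\omega_0<\min(\omega_C,\lambda_1D_{\min})\) and a suitable \(M_0\).
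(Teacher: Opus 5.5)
Your proposal is correct and follows essentially the same route as the paper. The ingredients match: the upper-triangular block structure with a bounded charge-to-velocity coupling, sectoriality via relative-bound-zero perturbation of the diagonal dissipative part, the diffusivity-weighted ionic energy giving decay at rate $D_{\min}\pi^2/4$, the Heck--Kim--Kozono Couette semigroup bound (which the paper applies after rescaling time by $|A|$ and reflecting when $A<0$), and a Duhamel convolution estimate.
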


On the same interpolation scale, the linear generator also controls the nonlinear perturbation problem.

\begin{theorem}[Nonlinear exponential stability]\label{thm:nonlinear-main}
Let \(\theta\in(3/4,1)\) and fix the parameters as in Theorem~\ref{thm:linear-main}.  There exist \(\varepsilon_*>0\), \(C\ge1\), and \(\omega>0\) such that every real-valued initial perturbation
\[
 \bm Z_{\mathrm{in}}
 =\bigl(\bm v_{\mathrm{in}},a_{+,\mathrm{in}},a_{-,\mathrm{in}}\bigr)
 \in\Xtheta,
 \qquad
 \norm{\bm Z_{\mathrm{in}}}_{\Xtheta}\le\varepsilon_*,
\]
generates a unique global mild solution in the class
\[
 \bm Z\in C([0,\infty);\Xtheta)
 \cap C((0,\infty);\Dphase)
 \cap C^1((0,\infty);\Hphase).
\]
Moreover,
\begin{equation}\label{eq:main-nonlinear-decay}
 \norm{\bm Z(t)}_{\Xtheta}
 \le C\e^{-\omega t}
 \norm{\bm Z_{\mathrm{in}}}_{\Xtheta},
 \qquad t\ge0.
\end{equation}
After decreasing \(\varepsilon_*\) if necessary, the corresponding concentrations satisfy
\[
 c_+(x,y,t)\ge \frac{c_0}{2},
 \qquad
 c_-(x,y,t)\ge \frac{c_0}{2},
 \qquad (x,y)\in\Omega,\ t\ge0.
\]
\end{theorem}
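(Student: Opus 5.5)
The proof recasts the perturbation problem as a semilinear abstract Cauchy problem and solves it by a fixed point in an exponentially weighted space built on the analytic semigroup \(\e^{t\Lzero}\) of Theorem~\ref{thm:linear-main}.

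\emph{Step 1: the perturbation system.} Write \(c_\pm=c_0+a_\pm\), \(\bm u=\bm U+\bm v\) and \(\phi=\Kop(a_+-a_-)\). Apply \(\Pdiv\) to the momentum equation to obtain
\[
\partial_t\bm Z=\Lzero\bm Z+\mathcal N(\bm Z),\qquad \bm Z=(\bm v,a_+,a_-).
\]
The nonlinearity \(\mathcal N\) is quadratic:
\[
\mathcal N(\bm Z)=\Bigl(-\Pdiv\bigl[(\bm v\cdot\nabla)\bm v+\gamma(a_+-a_-)\nabla\Kop(a_+-a_-)\bigr],\;-\bm v\cdot\nabla a_+ +D_+\Div(a_+\nabla\Kop(a_+-a_-)),\;-\bm v\cdot\nabla a_- -D_-\Div(a_-\nabla\Kop(a_+-a_-))\Bigr).
\]
The \(E_0\) drift terms \(\mp D_\pm E_0\partial_x a_\pm\) are linear and belong to \(\Lzero\). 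The gradient part \(\gamma\rho E_0\bm e_1\) is also linear and is already absorbed in \(\Lzero\).

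\emph{Step 2: the key quadratic estimate.} We aim to prove
\[
\|\mathcal N(\bm Z_1)-\mathcal N(\bm Z_2)\|_{\Hphase}\le C(\|\bm Z_1\|_{\Xtheta}+\|\bm Z_2\|_{\Xtheta})\|\bm Z_1-\bm Z_2\|_{\Xtheta}.
\]
By \eqref{eq:Xembed}, elements of \(\Xtheta\) lie in \(H^{2\theta}\subset L^\infty\) and have gradients in \(H^{2\theta-1}\subset L^4\), since \(2\theta-1>1/2\) in two dimensions. Hence \(\bm v\cdot\nabla a\) and \((\bm v\cdot\nabla)\bm v\) are controlled in \(L^2\) by \(L^\infty\times L^2\) products.

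For the Poisson term, elliptic regularity gives \(\Kop\rho\in H^2\). We expand \(\Div(a\nabla\Kop\rho)=\nabla a\cdot\nabla\Kop\rho-\lambda_{\mathrm D}^{-2}a\rho\). The first piece is bounded by \(L^4\times L^4\) and the second by \(L^\infty\times L^2\). The Leray projection is bounded on \(L^2\). Bilinearity then gives the difference estimate.

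\emph{Step 3: analytic-semigroup smoothing.} Since \(\Lzero\) is sectorial with the decay \eqref{eq:main-linear-decay}, fix \(\omega\in(0,\omega_0)\). Standard analytic-semigroup theory combined with the interpolation inequality gives
\[
\|\e^{t\Lzero}\|_{\mathcal L(\Xtheta)}\le M\e^{-\omega t},\qquad \|\e^{t\Lzero}\|_{\mathcal L(\Hphase,\Xtheta)}\le M t^{-\theta}\e^{-\omega t}.
\]
The second bound holds because \(\Xtheta\) is the complex interpolation space between \(\Hphase\) and \(\mathcal D(\Lzero)=\Dphase\) with equivalent norms, and \(\|\Lzero\e^{t\Lzero}\|\le Ct^{-1}\e^{-\omega t}\).

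\emph{Step 4: fixed point.} Define the mild-solution map
\[
\Phi(\bm Z)(t)=\e^{t\Lzero}\bm Z_{\mathrm{in}}+\int_0^t\e^{(t-s)\Lzero}\mathcal N(\bm Z(s))\dd s.
\]
We work on the space with norm \(\sup_t\e^{\omega t}\|\bm Z(t)\|_{\Xtheta}\). The Duhamel term is bounded by
\[
\int_0^t(t-s)^{-\theta}\e^{-\omega(t-s)}\e^{-2\omega s}\dd s\,\|\bm Z\|^2\le C\e^{-\omega t}\|\bm Z\|^2,
\]
which is finite since \(\theta<1\). A contraction in a small ball therefore gives existence, uniqueness and \eqref{eq:main-nonlinear-decay}. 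Continuity into \(\Xtheta\) at \(t=0\) uses density of \(\Dphase\) in \(\Xtheta\).

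\emph{Step 5: regularity.} We show \(t\mapsto\mathcal N(\bm Z(t))\) is Hölder continuous into \(\Hphase\) on \((0,\infty)\). This follows from the Lipschitz bound of Step 2 together with the Hölder continuity in time of \(\bm Z\) into \(\Xtheta\) for \(t>0\), which analytic smoothing provides. The standard theory (Pazy/Lunardi) then gives \(\bm Z\in C((0,\infty);\Dphase)\cap C^1((0,\infty);\Hphase)\).

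\emph{Step 6: positivity.} We have \(\|a_\pm(t)\|_{L^\infty}\le C_{\mathrm{emb}}C\varepsilon_*\). Choosing \(\varepsilon_*\) with \(C_{\mathrm{emb}}C\varepsilon_*\le c_0/2\) gives \(c_\pm\ge c_0/2\).

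The main obstacle is Step 3: identifying \(\Xtheta\) with the interpolation space of the generator's own graph domain, so that the fractional smoothing estimate holds. This rests on \(\mathcal D(\Lzero)=\Dphase\) with equivalent norms, which follows from Theorem~\ref{thm:linear-main}. The triangular structure shows that the lower-order Poisson and shear terms are relatively bounded with respect to \(\AS\oplus\AD\oplus\AD\).
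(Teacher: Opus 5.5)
Your proposal follows the paper's route. The ingredients are the same:
\begin{itemize}
\item a quadratic bound for $\Nzero\colon\Xtheta\to\Hphase$;
\item the identification $\Xtheta=[\Hphase,\mathcal D(\Lzero)]_\theta$, coming from graph-norm equivalence;
\item the smoothing bounds \eqref{eq:semigroup-estimates};
\item a contraction in the weighted norm $\sup_t\e^{\omega t}\|\bm Z(t)\|_{\Xtheta}$;
\item H\"older-forcing regularity for $t>0$;
\item the $L^\infty$ embedding for positivity.
\end{itemize}
Two of your local choices differ from the paper but are valid. Your $L^4\times L^4$ bound for $\nabla a_\pm\cdot\nabla\Kop q$ replaces the paper's $L^2\times W^{1,\infty}$ bound. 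Using the same $\omega$ in the weight and in the semigroup bound also works, because the quadratic term supplies a factor $\e^{-2\omega s}$.

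The one claim that does not follow from what you wrote is \emph{uniqueness in the class stated in the theorem}. Banach's fixed-point theorem gives uniqueness only inside the small ball of the weighted space. It does not rule out another mild solution in $C([0,\infty);\Xtheta)$ that leaves this ball or fails to decay. You need a separate local argument that uses no smallness. Take two mild solutions with the same data, both bounded by $M_T$ in $\Xtheta$ on $[0,T]$. The Lipschitz bound from your Step 2 and \eqref{eq:semigroup-estimates} give
\[
\|\bm Z_1(t)-\bm Z_2(t)\|_{\Xtheta}\le CM_T\int_0^t(t-s)^{-\theta}\|\bm Z_1(s)-\bm Z_2(s)\|_{\Xtheta}\dd s .
\]
This forces $\bm Z_1=\bm Z_2$ on an interval whose length depends only on $M_T$. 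Restarting finitely many times covers $[0,T]$. This is exactly the paper's argument.

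There is also a smaller omission. The analytic-semigroup theory is carried out on the complexified spaces. Before reading off $c_\pm=c_0+a_\pm\ge c_0/2$, you should note that $\e^{t\Lzero}$ and $\Nzero$ preserve real-valued states, so the fixed point is real for real data.

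Your Step 5 is only a citation of standard theory. It is acceptable, but the paper actually proves the needed H\"older continuity of $\bm Z$ in $\Xtheta$ for $t>0$. It does so by bounding $\bm Z$ in $[\Hphase,\Dphase]_\beta$ for some $\beta\in(\theta,1)$ and then applying \eqref{eq:interpolation-difference}.
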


The next result separates two regimes.  For unequal diffusivities, a Duhamel argument yields enhanced dissipation when Couette mixing dominates the zero-order electrostatic coupling.  For equal diffusivities, an exact factorization removes this coupling condition and recovers the full scalar Couette rate.

\begin{theorem}[Ionic enhanced dissipation with an equal-diffusivity refinement]\label{thm:ED-main}
Assume \(A\ne0\), and set
\[
 \kappa_{\mathrm D}:=\frac{c_0}{\lambda_{\mathrm D}^2}.
\]
There exist constants \(\varepsilon_{\mathrm{ED}}>0\), \(C_{\mathrm{ED}}\ge1\), and \(d_{\mathrm{ED}}>0\), depending only on the normalized Couette profile and the channel geometry, such that if
\begin{equation}\label{eq:ED-strong-shear}
 \frac{D_{\max}}{|A|}\le\varepsilon_{\mathrm{ED}},
 \qquad
 4C_{\mathrm{ED}}\kappa_{\mathrm D}D_{\max}
 \le d_{\mathrm{ED}}D_{\min}^{1/3}|A|^{2/3},
\end{equation}
then every solution \((a_+,a_-)\) of the linearized ionic subsystem on
\(\Omega=\T_{2\pi}\times(-1,1)\) with homogeneous Dirichlet conditions satisfies
\begin{equation}\label{eq:main-ED}
 \norm{(a_{+,\neq 0},a_{-,\neq 0})(t)}_{L^2\times L^2}
 \le C_{\mathrm{ED}}
 \exp\!\left[-\frac{d_{\mathrm{ED}}}{2}
 D_{\min}^{1/3}|A|^{2/3}t\right]
 \norm{(a_{+,\neq 0},a_{-,\neq 0})(0)}_{L^2\times L^2},
 \qquad t\ge0.
\end{equation}
The constants in \eqref{eq:main-ED} are independent of \(E_0\).

If \(D_+=D_-=D\), the second condition in \eqref{eq:ED-strong-shear} is unnecessary.  Under the single assumption
\[
 \frac{D}{|A|}\le\varepsilon_{\mathrm{ED}},
\]
the sharper estimate
\begin{equation}\label{eq:main-ED-equal}
 \norm{(a_{+,\neq 0},a_{-,\neq 0})(t)}_{L^2\times L^2}
 \le C_{\mathrm{ED}}
 \exp\!\left[-d_{\mathrm{ED}}D^{1/3}|A|^{2/3}t\right]
 \norm{(a_{+,\neq 0},a_{-,\neq 0})(0)}_{L^2\times L^2}
\end{equation}
holds for all \(t\ge0\), uniformly in \(E_0\) and \(\kappa_{\mathrm D}\).
\end{theorem}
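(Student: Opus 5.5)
The plan is to reduce everything to the scalar Couette advection--diffusion semigroup on \(\Omega\) with homogeneous Dirichlet data. For that semigroup we invoke the bounded-channel enhanced-dissipation estimate of Albritton, Beekie and Novack \cite{AlbrittonBeekieNovack2022}, in the form
\[
 \|\e^{t(D\Delta-Ay\partial_x)}f_{\neq0}\|_{L^2}\le C_{\mathrm{ED}}\e^{-d_{\mathrm{ED}}D^{1/3}|A|^{2/3}t}\|f_{\neq0}\|_{L^2}
 \qquad\text{whenever } D/|A|\le\varepsilon_{\mathrm{ED}}.
\]
The constants depend only on the normalized profile \(y\mapsto y\) and on the channel. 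This form is obtained by rescaling time by \(|A|\) and, if \(A<0\), reflecting \(x\mapsto -x\).

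\textbf{Step 1: the linearized ionic subsystem.} Linearizing about \eqref{eq:uniform-base}, with \(\psi=\Kop(a_+-a_-)\) and \(\Delta\psi=-(a_+-a_-)/\lambda_{\mathrm D}^2\), gives
\[
 \partial_ta_\pm+(Ay\pm D_\pm E_0)\partial_xa_\pm=D_\pm\Delta a_\pm\mp D_\pm\kappa_{\mathrm D}(a_+-a_-).
\]
The velocity perturbation drops out because the base concentrations are constant. Both the constant drift and the reaction act mode by mode in \(x\), so \(\mathscr P_{\neq0}\) commutes with the flow and we may assume \(a_\pm=a_{\pm,\neq0}\).

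The constant drift \(\pm D_\pm E_0\partial_x\) commutes with \(D_\pm\Delta-Ay\partial_x\) and with the Dirichlet condition. It generates the unitary translation group \(\tau_{h}f(x,y)=f(x-h,y)\), which also commutes with \(\mathscr P_{\neq0}\). Hence the scalar semigroups satisfy
\[
 S_\pm(t)=\tau_{\pm D_\pm E_0t}\,\e^{t(D_\pm\Delta-Ay\partial_x)},
\]
and they obey the estimate above with rate \(d_{\mathrm{ED}}D_\pm^{1/3}|A|^{2/3}\ge\lambda:=d_{\mathrm{ED}}D_{\min}^{1/3}|A|^{2/3}\), uniformly in \(E_0\). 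The first condition in \eqref{eq:ED-strong-shear} ensures \(D_\pm/|A|\le\varepsilon_{\mathrm{ED}}\).

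\textbf{Step 2: unequal diffusivities, via Duhamel and Gronwall.} Write \(a=(a_+,a_-)\) and \(a(t)=S(t)a(0)+\int_0^tS(t-s)Ra(s)\dd s\), where \(S=\mathrm{diag}(S_+,S_-)\) and \(R\) is the constant matrix
\[
 R=\kappa_{\mathrm D}\begin{pmatrix}-D_+&D_+\\ D_-&-D_-\end{pmatrix},
 \qquad |R|\le 2\kappa_{\mathrm D}D_{\max}.
\]
Setting \(Y(t)=\e^{\lambda t}\|a(t)\|\), we get
\[
 Y(t)\le C_{\mathrm{ED}}Y(0)+2C_{\mathrm{ED}}\kappa_{\mathrm D}D_{\max}\int_0^tY(s)\dd s,
\]
and Gronwall gives \(Y(t)\le C_{\mathrm{ED}}Y(0)\e^{2C_{\mathrm{ED}}\kappa_{\mathrm D}D_{\max}t}\). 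The second condition in \eqref{eq:ED-strong-shear} says \(2C_{\mathrm{ED}}\kappa_{\mathrm D}D_{\max}\le\lambda/2\), which yields \eqref{eq:main-ED}. None of the constants involve \(E_0\).

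\textbf{Step 3: equal diffusivities, via factorization.} Pass to the sum and charge variables \(s=a_++a_-\) and \(r=a_+-a_-\). This change is \(\sqrt2\) times an orthogonal map, so norms are preserved up to a factor that cancels. The system becomes
\[
 \partial_t(s,r)=(D\Delta-Ay\partial_x)(s,r)+\mathcal M(s,r),
 \qquad
 \mathcal M=-DE_0\partial_x\begin{pmatrix}0&1\\1&0\end{pmatrix}-2D\kappa_{\mathrm D}\begin{pmatrix}0&0\\0&1\end{pmatrix}.
\]
The operator \(\mathcal M\) acts only in \(x\), has constant coefficients and commutes with the scalar Couette generator on each Fourier mode \(k\). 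On mode \(k\), the matrix \(\mathcal M_k=-\ii kDE_0\sigma_x-2D\kappa_{\mathrm D}\,\mathrm{diag}(0,1)\) has skew-Hermitian part plus a negative semidefinite Hermitian part. Hence \(\e^{t\mathcal M_k}\) is a contraction, uniformly in \(E_0\) and \(\kappa_{\mathrm D}\).

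Consequently \(\e^{t\mathcal L}=\e^{t(D\Delta-Ay\partial_x)}\e^{t\mathcal M}\) mode by mode. Summing over \(k\neq0\) gives \eqref{eq:main-ED-equal} with the full rate and the same constants.

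\textbf{Main obstacle.} I expect the main difficulty to be the rigorous justification of this commuting factorization, and of the translation identity in Step 1, at the level of semigroups with Dirichlet domains rather than formal computation. I would handle it by working on each Fourier mode \(k\). There everything reduces to a one-dimensional problem in \(y\) with matrix coefficients independent of \(y\), so the operators commute on a common core and their semigroups commute. The second point to check is that the cited scalar estimate is used in exactly the normalized form stated above.
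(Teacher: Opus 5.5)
Your proposal is correct and follows the paper's proof essentially step for step. The three ingredients match: unitary translations that remove the \(E_0\)-drift; Duhamel--Gronwall with \(\|\mathcal R_{\mathrm D}\|\le 2\kappa_{\mathrm D}D_{\max}\) for unequal diffusivities; and, when \(D_+=D_-\), a commuting mode-wise factorization with a contractive drift--reaction matrix. Your sum/charge variables are an inessential variant: the paper stays in species variables and computes \(\operatorname{Re}\langle M_k\bm z,\bm z\rangle=-\kappa_{\mathrm D}D|z_+-z_-|^2\).

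The one point you leave as a check is handled in the paper as follows. Rescaling to \(\T_1\times(0,1)\) makes the diffusion anisotropic, with vertical coefficient \(D_\pm/(4|A|)\) and horizontal coefficient \(D_\pm/(4\pi^2|A|)\). The paper then splits off the commuting, contractive horizontal heat semigroup, so that the vertical-diffusion Couette estimate of Albritton--Beekie--Novack applies, with \(d_{\mathrm{ED}}=4^{-1/3}d_{\mathrm A}\).
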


\begin{remark}[Unequal and equal diffusivity mechanisms]
For \(D_+\ne D_-\), the two species evolve under different scalar advection--diffusion generators.  The proof of \eqref{eq:main-ED} therefore treats the Poisson reaction as a bounded coupling, and the second condition in \eqref{eq:ED-strong-shear} is a sufficient condition for retaining half of the slower scalar mixing rate.  In particular, \eqref{eq:main-ED} reduces the nonzero-mode norm by at least a factor of two whenever
\[
 t\ge \frac{2\log(2C_{\mathrm{ED}})}{d_{\mathrm{ED}}}
 D_{\min}^{-1/3}|A|^{-2/3}.
\]
When \(D_+=D_-=D\), the two species share a common scalar Couette generator and the remaining drift--reaction semigroup is contractive and commuting.  This yields \eqref{eq:main-ED-equal} without an electrostatic-coupling restriction, with a factor-two relaxation bound
\[
 t\ge \frac{\log(2C_{\mathrm{ED}})}{d_{\mathrm{ED}}}
 D^{-1/3}|A|^{-2/3}.
\]
The operator obstruction to extending this exact factorization to unequal diffusivities is described in Remark~\ref{rem:ED-obstruction}.
\end{remark}

The last main result extends the stability theory from the electroneutral reference state to weak Poisson--Boltzmann layers with arbitrary positive ionic diffusivities.

\begin{theorem}[Weak Poisson--Boltzmann layers]\label{thm:PB-main}
Let \(\theta\in(3/4,1)\) and fix
\[
 A,E_0\in\R,
 \qquad
 \nu,D_+,D_-,\lambda_{\mathrm D},\gamma,c_0>0.
\]
There exists
\[
 \delta_*=
 \delta_*(A,E_0,\nu,D_+,D_-,\lambda_{\mathrm D},\gamma,c_0)>0
\]
such that, if \(\delta_\zeta<\delta_*\), then the exact state \eqref{eq:base-state} satisfies the following assertions.
\begin{enumerate}
\item Its linearized generator \(\Lzeta\), acting on \(\Hphase\) with domain \(\Dphase\), generates an exponentially stable analytic semigroup.  More precisely, there exist \(M_\zeta\ge1\) and \(\omega_\zeta>0\) such that
\begin{equation}\label{eq:PB-main-linear}
 \norm{\e^{t\Lzeta}}_{\mathcal L(\Hphase)}
 \le M_\zeta\e^{-\omega_\zeta t},
 \qquad t\ge0.
\end{equation}
\item There exist \(\varepsilon_\zeta>0\), \(C_{\zeta,\mathrm{nl}}\ge1\), and \(\omega_{\zeta,\mathrm{nl}}>0\) such that every real-valued perturbation \(\bm Y_{\mathrm{in}}\in\Xtheta\) with
\[
 \norm{\bm Y_{\mathrm{in}}}_{\Xtheta}\le\varepsilon_\zeta
\]
generates a unique global mild solution in the class
\[
 \bm Y\in C([0,\infty);\Xtheta)
 \cap C((0,\infty);\Dphase)
 \cap C^1((0,\infty);\Hphase),
\]
and
\begin{equation}\label{eq:PB-main-nonlinear}
 \norm{\bm Y(t)}_{\Xtheta}
 \le C_{\zeta,\mathrm{nl}}\e^{-\omega_{\zeta,\mathrm{nl}}t}
 \norm{\bm Y_{\mathrm{in}}}_{\Xtheta},
 \qquad t\ge0.
\end{equation}
Writing \(\bm Y=(\bm v,b_+,b_-)\) in species coordinates and setting
\[
 m_*:=\min_{y\in[-1,1]}\min\{c_{+,*}(y),c_{-,*}(y)\}>0,
\]
the threshold \(\varepsilon_\zeta\) can be chosen so that the perturbed concentrations satisfy
\[
 c_{\pm,*}(y)+b_\pm(x,y,t)\ge \frac{m_*}{2}
 \qquad (x,y)\in\Omega,\ t\ge0.
\]
\end{enumerate}
\end{theorem}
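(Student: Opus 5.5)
The plan is to treat \(\Lzeta\) as a perturbation of \(\Lzero\) and then repeat the nonlinear fixed-point argument of Theorem~\ref{thm:nonlinear-main} on \(\Xtheta\).

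\textbf{Base state and perturbation.} Write the perturbation as \(\bm Y=(\bm v,b_+,b_-)\) around the exact state \eqref{eq:base-state}. The linearization has the form \(\Lzeta=\Lzero+\mathcal B_\zeta\). The terms in \(\mathcal B_\zeta\) are built from the background profiles:
\begin{itemize}
\item \((U_*-Ay)\partial_x\) and \(v_2U_*'\) in the momentum equation;
\item \(\gamma\varrho_*(-\nabla\Kop(b_+-b_-))\), plus \(-\gamma\nabla\phi_*\,(b_+-b_-)\) after Leray projection;
\item \(v_2c_{\pm,*}'\) in the ionic equations;
\item \(\mp D_\pm\Div\bigl((c_{\pm,*}-c_0)\nabla\Kop(b_+-b_-)\bigr)\);
\item \(\pm D_\pm\Div(b_\pm\nabla\phi_*)\).
\end{itemize}
After the Galilean shift, the background velocity is \(U_*-B\).

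The first step is to show \(\|\phi_*\|_{C^2}\le C\delta_\zeta\). This follows from the maximum principle for \eqref{eq:PB}, which gives \(|\phi_*|\le\max|\zeta_\pm|\), and then from elliptic regularity applied to \(\phi_*-\ell_\zeta\). As a consequence,
\[
\|U_*-U_{\mathrm C}\|_{C^2}+\|c_{\pm,*}-c_0\|_{C^2}+\|\varrho_*\|_{C^2}\le C\delta_\zeta .
\]
Every coefficient of \(\mathcal B_\zeta\) is therefore \(O(\delta_\zeta)\) in \(C^1\). Since \(\mathcal B_\zeta\) has order at most one, this yields
\[
\|\mathcal B_\zeta Z\|_{\Hphase}\le C\delta_\zeta\|Z\|_{H^1}\le C\delta_\zeta\|Z\|_{\Hphase}^{1/2}\|Z\|_{\Dphase}^{1/2}.
\]
So \(\mathcal B_\zeta\) is a small relatively bounded perturbation of \(\Lzero\), with relative bound zero.

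\textbf{Linear stability (part 1).} From Theorem~\ref{thm:linear-main}, \(\Lzero\) is sectorial with \(\|\e^{t\Lzero}\|\le M_0\e^{-\omega_0t}\). Equivalently, there is a resolvent bound
\[
\|(\lambda-\Lzero)^{-1}\|_{\mathcal L(\Hphase,\Dphase)}\le C \quad\text{on}\quad \Sigma=\{\Rey\lambda\ge-\omega_0/2\}\cup\text{sector}.
\]
The \(\mathcal L(\Hphase,\Dphase)\) part of this bound will take the most care. I would obtain it from sectoriality together with the graph-norm equivalence \(\|Z\|_{\Dphase}\simeq\|Z\|_{\Hphase}+\|\Lzero Z\|_{\Hphase}\).

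The Neumann series
\[
(\lambda-\Lzeta)^{-1}=(\lambda-\Lzero)^{-1}\bigl(\Id-\mathcal B_\zeta(\lambda-\Lzero)^{-1}\bigr)^{-1}
\]
converges on the whole region once \(C\delta_\zeta<1/2\). This gives sectoriality of \(\Lzeta\) and uniform resolvent bounds on the shifted sector. The decay \eqref{eq:PB-main-linear} with \(\omega_\zeta=\omega_0/2\) then follows from the Dunford integral. I would put this step in the appendix perturbation lemma.

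The main obstacle is uniformity at large \(|\lambda|\) inside the strip \(\Rey\lambda\ge-\omega_0/2\). For \(\Lzero\) this is not immediate from sectoriality alone: one needs sectoriality with vertex to the left of \(-\omega_0/2\). I would obtain that by combining the spectral bound with compactness of the resolvent and the analytic-sector estimate for large \(|\lambda|\).

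\textbf{Nonlinear stability (part 2).} The quadratic nonlinearity is the same as in the electroneutral case: \(\Pdiv(\bm v\cdot\nabla\bm v)\), the term \(\gamma(b_+-b_-)\nabla\Kop(b_+-b_-)\), \(\bm v\cdot\nabla b_\pm\), and \(\mp D_\pm\Div(b_\pm\nabla\Kop(b_+-b_-))\). The estimate \(\|N(Y)\|_{\Hphase}\le C\|Y\|_{\Xtheta}^2\), with Lipschitz differences, carries over unchanged.

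The interpolation space \([\Hphase,D(\Lzeta)]_\theta=\Xtheta\) is unchanged because \(D(\Lzeta)=\Dphase\) with equivalent graph norms. Analytic smoothing gives
\[
\|\e^{t\Lzeta}\|_{\mathcal L(\Hphase,\Xtheta)}\le Ct^{-\theta}\e^{-\omega_\zeta t}.
\]
The Duhamel fixed point in the weighted norm \(\sup_t \e^{\omega' t}\|Y(t)\|_{\Xtheta}\), with \(\omega'<\omega_\zeta\), closes for small data. Regularity in \(\Dphase\) and \(C^1(\Hphase)\) for \(t>0\) follows from standard analytic-semigroup theory, using that the nonlinearity is locally Hölder in time.

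Finally, the positivity bound \(c_{\pm,*}+b_\pm\ge m_*/2\) follows from the \(L^\infty\) embedding \(\|b_\pm\|_{L^\infty}\le C_{\mathrm{emb}}C_{\zeta,\mathrm{nl}}\varepsilon_\zeta\le m_*/2\).
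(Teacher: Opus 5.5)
Your proposal is correct and follows essentially the paper's route: small profile bounds, then \(\|(\Lzeta-\Lzero)\bm Y\|_{\Hphase}\le C\delta_\zeta\|\bm Y\|_{\Dphase}\) converted to a graph-norm perturbation via Lemma~\ref{lem:graph-norm}, then a Neumann series on a uniform \(\mathcal L(\Hphase,\Dphase)\) resolvent bound over \(\{\operatorname{Re}\lambda\ge-\omega_0/2\}\) (this is Lemma~\ref{lem:stable-perturbation}), and finally the same fixed-point argument, since \(\Nzeta=\Nzero\). The differences are minor: you get \(\|\phi_*\|_{C^2}\le C\delta_\zeta\) from the maximum principle and the ODE for \(\phi_*-\ell_\zeta\) rather than the implicit function theorem (more elementary, equally valid), and you conclude decay via the Dunford integral rather than the equality of spectral and growth bounds.
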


\section{The exact state and linear stability}\label{sec:linear-proof}

We first prove Proposition~\ref{prop:exact}.  The resulting identities also determine the coefficients in the nonuniform linearization used in Section~\ref{sec:PB-proof}.

\begin{proof}[Proof of Proposition~\ref{prop:exact}]
Write \(\phi=\ell_\zeta+w\) with \(w\in H_0^1(-1,1)\), and consider
\[
 \mathscr F(\phi)
 :=\int_{-1}^{1}
 \left[
 \frac{\lambda_{\mathrm D}^2}{2}|\phi'|^2
 +c_0\e^{-\phi}+c_0\e^{\phi}
 \right]\dd y
\]
on the affine space
\[
 \mathscr A_\zeta
 :=\{\phi\in H^1(-1,1):\phi(\pm1)=\zeta_\pm\}.
\]
Because \(w\in H_0^1(-1,1)\), Poincar\'e's inequality gives
\[
 \|w\|_{H^1}\le C\|w'\|_{L^2}.
\]
The elementary estimate
\[
 \|\ell_\zeta'+w'\|_{L^2}^2
 \ge \frac12\|w'\|_{L^2}^2-\|\ell_\zeta'\|_{L^2}^2
\]
and the positivity of the exponential terms imply
\[
 \mathscr F(\ell_\zeta+w)
 \ge \frac{\lambda_{\mathrm D}^2}{4}\|w'\|_{L^2}^2
 -\frac{\lambda_{\mathrm D}^2}{2}\|\ell_\zeta'\|_{L^2}^2.
\]
Thus every minimizing sequence is bounded in \(H^1(-1,1)\).  Since 
\(H^1(-1,1)\hookrightarrow\hookrightarrow C^0([-1,1])\), up to a subsequence,
\[
 w_n\rightharpoonup w\quad\hbox{in }H_0^1(-1,1),
 \qquad
 w_n\to w\quad\hbox{uniformly on }[-1,1].
\]
Weak lower semicontinuity of the Dirichlet integral, together with uniform convergence of the two exponential terms, shows that \(\mathscr F\) attains its minimum at some \(\phi_*\in\mathscr A_\zeta\).

For \(h\in H_0^1(-1,1)\), the Euler--Lagrange equation is
\[
 \lambda_{\mathrm D}^2\int_{-1}^{1}\phi_*'h'\dd y
 +c_0\int_{-1}^{1}(\e^{\phi_*}-\e^{-\phi_*})h\dd y=0,
\]
which is the weak form of \eqref{eq:PB}.  Moreover, the second variation satisfies
\[
 D^2\mathscr F(\phi)[h,h]
 =\lambda_{\mathrm D}^2\|h'\|_{L^2}^2
 +c_0\int_{-1}^{1}(\e^{-\phi}+\e^{\phi})h^2\dd y>0
\]
for nonzero \(h\in H_0^1(-1,1)\).  Thus \(\mathscr F\) is strictly convex on the affine space \(\mathscr A_\zeta\).  Every weak solution of \eqref{eq:PB} is a critical point of \(\mathscr F\).  Strict convexity therefore gives uniqueness of the weak solution.  Since the right-hand side of \eqref{eq:PB} is a smooth function of \(\phi_*\), one-dimensional elliptic bootstrapping gives \(\phi_*\in C^\infty([-1,1])\).

For the steady concentrations in \eqref{eq:base-state}, differentiation gives
\[
 c_{+,*}'+c_{+,*}\phi_*'=0,
 \qquad
 c_{-,*}'-c_{-,*}\phi_*'=0.
\]
Define the nonadvective ionic fluxes by
\[
 \bm J_\pm:=-D_\pm\bigl[\nabla c_\pm
 \pm c_\pm(\nabla\phi-E_0\bm e_1)\bigr].
\]
The two identities above give \(J_{\pm,2,*}=0\), whereas
\[
 J_{+,1,*}=D_+E_0c_{+,*},
 \qquad J_{-,1,*}=-D_-E_0c_{-,*}.
\]
Because the streamwise fluxes depend only on \(y\), their divergences vanish.  Also \(\bm u_*\cdot\nabla c_{\pm,*}=0\).  Both steady Nernst--Planck equations therefore hold.

From the Poisson equation,
\(
 \varrho_*=-\lambda_{\mathrm D}^2\phi_*''
\).
Consequently, the streamwise momentum equation becomes
\[
 \nu U_*''+\gamma E_0\varrho_*=0,
\]
and hence
\[
 U_*''=\frac{\gamma\lambda_{\mathrm D}^2}{\nu}E_0\phi_*''.
\]
Integrating twice yields the formula for \(U_*\) in \eqref{eq:base-state}.  Because
\[
 \ell_\zeta(\pm1)=\zeta_\pm=\phi_*(\pm1),
\]
the electroosmotic correction vanishes at the plates and therefore \(U_*(\pm1)=U_{\mathrm C}(\pm1)=U_\pm\).  Moreover, \(\bm u_*=U_*(y)\bm e_1\) is divergence free and has zero normal component.  Finally,
\[
 \frac{\dd}{\dd y}(c_{+,*}+c_{-,*})
 =-\varrho_*\phi_*'.
\]
In the normal direction, the momentum equation reads
\[
 p_*'=-\gamma\varrho_*\phi_*'
 =\gamma\frac{\dd}{\dd y}(c_{+,*}+c_{-,*}),
\]
which gives the stated pressure up to the additive constant \(p_0\).  Together with the flux identities above, this verifies all equations and boundary conditions in \eqref{eq:PNPNS}--\eqref{eq:boundary}.
\end{proof}

Linearization about the uniform state is obtained by writing
\[
 \bm u=\bm U+\bm v,
 \qquad
 c_\pm=c_0+a_\pm,
 \qquad
 \phi=\varphi,
 \qquad
 p=p_*+\pi,
\]
and setting
\[
 q:=a_+-a_-.
\]
This gives the linearized system
\begin{equation}\label{eq:linearized-uniform}
\left\{
\begin{aligned}
 \partial_t\bm v+Ay\partial_x\bm v+Av_2\bm e_1+\nabla\pi
 &=\nu\Delta\bm v+\gamma E_0q\bm e_1,\\
 \Div\bm v&=0,\\
 \partial_t a_+ +Ay\partial_xa_+
 &=D_+\Delta a_+ +D_+c_0\Delta\varphi-D_+E_0\partial_xa_+,\\
 \partial_t a_- +Ay\partial_xa_-
 &=D_-\Delta a_- -D_-c_0\Delta\varphi+D_-E_0\partial_xa_-,\\
 -\lambda_{\mathrm D}^2\Delta\varphi&=q,
 \qquad \varphi|_{y=\pm1}=0.
\end{aligned}
\right.
\end{equation}
All perturbations \(\bm v,a_+,a_-\) satisfy homogeneous Dirichlet conditions at \(y=\pm1\).

For unequal diffusivities, the ionic block is dissipative in the following weighted energy.

\begin{lemma}[Dissipation of the ionic block]\label{lem:ionic}
Let \((a_+,a_-)\) solve the ionic part of \eqref{eq:linearized-uniform}, and define
\[
 \mathcal E_{\mathrm{ion}}(t)
 :=\frac{1}{2D_+}\norm{a_+(t)}_{L^2}^2
 +\frac{1}{2D_-}\norm{a_-(t)}_{L^2}^2.
\]
Then
\begin{equation}\label{eq:ionic-energy}
 \frac{\dd}{\dd t}\mathcal E_{\mathrm{ion}}
 +\norm{\nabla a_+}_{L^2}^2
 +\norm{\nabla a_-}_{L^2}^2
 +\frac{c_0}{\lambda_{\mathrm D}^2}\norm{a_+-a_-}_{L^2}^2
 =0.
\end{equation}
Define \(\lambda_1:=\pi^2/4\), which is the first eigenvalue of \(\AD\) on \(\Omega=\T_{2\pi}\times(-1,1)\).  Then
\begin{equation}\label{eq:ionic-exp}
 \norm{(a_+,a_-)(t)}_{L^2\times L^2}
 \le \left(\frac{D_{\max}}{D_{\min}}\right)^{1/2}
 \e^{-D_{\min}\lambda_1t}
 \norm{(a_+,a_-)(0)}_{L^2\times L^2}.
\end{equation}
\end{lemma}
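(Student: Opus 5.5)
Dividing the equation for \(a_\pm\) by \(D_\pm\), I test against \(a_\pm\) in \(L^2(\Omega)\) and sum the two identities.  The weights \(1/D_\pm\) in \(\mathcal E_{\mathrm{ion}}\) are chosen exactly so that, after division, every diffusive and electrostatic term carries coefficient one.  It is enough to justify the computation for data in \(\mathcal D(\AD)^2\), so that the solutions are strong, and then pass to general data by density.

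The terms are handled as follows.  The advection \(Ay\partial_x a_\pm\) and the drift \(\mp D_\pm E_0\partial_x a_\pm\) are skew: their coefficients are independent of \(x\), so by periodicity \(\int_\Omega y\,\partial_x(a_\pm^2/2)\dd x\dd y=0\), and similarly for the drift.  The diffusion term gives \(-\|\nabla a_\pm\|_{L^2}^2\), because \(a_\pm\) vanishes on \(y=\pm1\) and is periodic in \(x\).  For the Poisson terms, I use \(c_0\Delta\varphi=-(c_0/\lambda_{\mathrm D}^2)q\).  After division by \(D_\pm\), the species contribute \(+c_0\int\Delta\varphi\,a_+\) and \(-c_0\int\Delta\varphi\,a_-\), whose sum is
\[
 c_0\int_\Omega\Delta\varphi\,q=-\frac{c_0}{\lambda_{\mathrm D}^2}\|q\|_{L^2}^2 .
\]
No integration by parts is needed here, since \(\Delta\varphi=-q/\lambda_{\mathrm D}^2\) holds pointwise.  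Collecting these contributions gives exactly \eqref{eq:ionic-energy}.

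For the exponential bound \eqref{eq:ionic-exp}, I drop the nonnegative reaction term and apply Poincaré's inequality \(\|\nabla a_\pm\|^2\ge\lambda_1\|a_\pm\|^2\), which is valid because \(\lambda_1=\pi^2/4\) is the first Dirichlet eigenvalue in \(y\) on \((-1,1)\).  Next I bound
\[
 \lambda_1\|a_\pm\|^2=2D_\pm\lambda_1\cdot\frac{\|a_\pm\|^2}{2D_\pm}\ge2D_{\min}\lambda_1\frac{\|a_\pm\|^2}{2D_\pm},
\]
which yields \(\frac{\dd}{\dd t}\mathcal E_{\mathrm{ion}}\le-2D_{\min}\lambda_1\mathcal E_{\mathrm{ion}}\).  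Grönwall's inequality then gives \(\mathcal E_{\mathrm{ion}}(t)\le\e^{-2D_{\min}\lambda_1t}\mathcal E_{\mathrm{ion}}(0)\).

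It remains to compare norms.  From \(\frac{1}{2D_{\max}}\|(a_+,a_-)\|^2\le\mathcal E_{\mathrm{ion}}\le\frac{1}{2D_{\min}}\|(a_+,a_-)\|^2\), taking square roots produces the factor \((D_{\max}/D_{\min})^{1/2}\) in \eqref{eq:ionic-exp}.

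The only delicate point is rigor rather than algebra.  One needs the ionic subsystem to be well posed with enough regularity for the energy identity, say \(a_\pm\in C^1((0,\infty);L^2)\cap C((0,\infty);\mathcal D(\AD))\).  I would obtain this by viewing the ionic generator as \(D_\pm\Delta\) plus the first-order skew terms plus the bounded operator \(\Delta\Kop\), which is a perturbation of the sectorial \(-D_\pm\AD\).  The resulting analytic semigroup gives strong solutions for \(\mathcal D(\AD)^2\) data.  The identity is then integrated in time and extended to \(L^2\) data by density together with continuity of the semigroup.
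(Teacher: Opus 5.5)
Your proposal is correct and follows the paper's proof essentially step by step: you test with $a_\pm/D_\pm$, remove the shear and drift terms by $x$-periodicity, and use $\Delta\varphi=-q/\lambda_{\mathrm D}^2$ pointwise for the reaction term; then Poincar\'e with $\lambda_1=\pi^2/4$, Gr\"onwall, and the two-sided comparison of $\mathcal E_{\mathrm{ion}}$ with the $L^2\times L^2$ norm give \eqref{eq:ionic-exp}. Your sectoriality/density justification matches the paper's remark after the lemma, and the only item you omit is the paper's note that on the complexified space the real-coefficient identity is applied separately to real and imaginary parts, which is immediate.
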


\begin{proof}
Multiply the positive and negative ionic equations by \(a_+/D_+\) and \(a_-/D_-\), respectively, integrate over \(\Omega\), and add the results.  The Couette terms vanish by periodicity in \(x\).  The tangential electric drifts also vanish separately:
\[
 \int_\Omega a_\pm\partial_xa_\pm\dd x\dd y=0.
\]
Diffusion contributes
\(
 -\|\nabla a_+\|_{L^2}^2-\|\nabla a_-\|_{L^2}^2
\).
For the Poisson terms,
\[
 c_0\int_\Omega(a_+-a_-)\Delta\varphi\dd x\dd y
 =-\frac{c_0}{\lambda_{\mathrm D}^2}
 \norm{a_+-a_-}_{L^2}^2.
\]
Hence \eqref{eq:ionic-energy} holds for real-valued solutions.  On the complexified phase space, apply the same real-coefficient identity separately to the real and imaginary parts of \(a_\pm\).  Adding the two identities gives \eqref{eq:ionic-energy} with the usual complex \(L^2\)-norms.  Poincar\'e's inequality and
\[
 \norm{a_+}_{L^2}^2+\norm{a_-}_{L^2}^2
 \ge 2D_{\min}\mathcal E_{\mathrm{ion}}
\]
give
\[
 \frac{\dd}{\dd t}\mathcal E_{\mathrm{ion}}
 +2D_{\min}\lambda_1\mathcal E_{\mathrm{ion}}\le0.
\]
Since
\[
 \frac{1}{2D_{\max}}\|(a_+,a_-)\|_{L^2\times L^2}^2
 \le \mathcal E_{\mathrm{ion}}
 \le \frac{1}{2D_{\min}}\|(a_+,a_-)\|_{L^2\times L^2}^2,
\]
the preceding inequality gives \eqref{eq:ionic-exp}.
\end{proof}

Let \(\bm a=(a_+,a_-)^{\mathsf T}\).  Eliminating \(\varphi\) with the Poisson equation, the ionic generator can be written as
\[
 \Li\bm a=
 \begin{pmatrix}
 -D_+\AD a_+-Ay\partial_xa_+-D_+E_0\partial_xa_+
 -\dfrac{D_+c_0}{\lambda_{\mathrm D}^2}(a_+-a_-)\\[2mm]
 -D_-\AD a_--Ay\partial_xa_-+D_-E_0\partial_xa_-
 +\dfrac{D_-c_0}{\lambda_{\mathrm D}^2}(a_+-a_-)
 \end{pmatrix},
\]
with domain \(\mathcal D(\Li)=\mathcal D(\AD)^2\).  For relative-bound parameter
\(\varepsilon_{\mathrm{rel}}>0\), the \(H^1\)--\(H^2\) interpolation inequality together with the Dirichlet elliptic estimate gives
\[
 \|\nabla f\|_{L^2}
 \le\varepsilon_{\mathrm{rel}}\|\AD f\|_{L^2}
 +C_{\varepsilon_{\mathrm{rel}}}\|f\|_{L^2},
 \qquad f\in\mathcal D(\AD),
\]
where \(C_{\varepsilon_{\mathrm{rel}}}\) depends on
\(\varepsilon_{\mathrm{rel}}\) and the fixed channel but not on \(f\).
As \(|y|\le1\), the first-order transports \(Ay\partial_x\) and \(D_\pm E_0\partial_x\) have relative bound zero with respect to \(\operatorname{diag}(D_+\AD,D_-\AD)\), while the Poisson reaction matrix is a bounded zero-order operator on \(L^2(\Omega)^2\).  The relatively bounded perturbation theorem for sectorial operators \cite[Chapter~3, Section~3.2]{Pazy1983} shows that \(\Li\) is sectorial on \(\mathcal D(\AD)^2\).   Identity \eqref{eq:ionic-energy}, initially for domain data and then extended by density, gives the exponentially stable ionic semigroup.

For the hydrodynamic block define
\[
 \LC\bm v
 :=-\nu\AS\bm v
 -\Pdiv\bigl(Ay\partial_x\bm v+Av_2\bm e_1\bigr),
 \qquad
 \mathcal D(\LC)=\mathcal D(\AS).
\]
For the velocity block, we use the periodic-channel Couette semigroup estimate in the following lemma.

\begin{lemma}[Classical Couette semigroup estimate]\label{lem:couette-semigroup}
For fixed \(A\in\R\) and \(\nu>0\), the operator \(\LC\) generates an analytic semigroup on \(L^2_\sigma(\Omega)\).  There exist constants \(M_{\mathrm C}\ge1\) and \(\omega_{\mathrm C}>0\), depending on \((A,\nu)\), such that
\begin{equation}\label{eq:couette-semigroup}
 \norm{\e^{t\LC}}_{\mathcal L(L^2_\sigma)}
 \le M_{\mathrm C}\e^{-\omega_{\mathrm C}t},
 \qquad t\ge0.
\end{equation}
\end{lemma}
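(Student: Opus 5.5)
The plan is to prove analyticity by relative-boundedness, and decay by an energy estimate in \(x\)-Fourier modes: a plain energy estimate for the zero mode, and a resolvent bound combined with Gearhart--Pr\"uss for the nonzero modes.

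\textbf{Analyticity.} The Stokes operator \(-\nu\AS\) is self-adjoint and negative on \(L^2_\sigma(\Omega)\), so it generates an analytic semigroup. The perturbation \(\bm v\mapsto -\Pdiv(Ay\partial_x\bm v+Av_2\bm e_1)\) is first order with bounded coefficients, since \(|y|\le1\). By the interpolation inequality \(\|\nabla\bm v\|_{L^2}\le\varepsilon\|\AS\bm v\|_{L^2}+C_\varepsilon\|\bm v\|_{L^2}\) (Stokes elliptic regularity on the periodic strip), it has relative bound zero. The same perturbation theorem from \cite{Pazy1983} used for \(\Li\) then makes \(\LC\) sectorial with domain \(\mathcal D(\AS)\).

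\textbf{Energy identity.} Next decompose \(\bm v=\mathscr P_0\bm v+\mathscr P_{\neq0}\bm v\); both pieces are invariant under \(\LC\), because the coefficients depend only on \(y\). For a divergence-free \(\bm v\) with Dirichlet data, pairing the equation with \(\bm v\) kills the transport term \(Ay\partial_x\). What remains is
\[
\tfrac12\tfrac{\dd}{\dd t}\|\bm v\|_{L^2}^2+\nu\|\nabla\bm v\|_{L^2}^2=-A\int_\Omega v_1v_2 .
\]
On the zero mode, \(\partial_xv_2=0\), \(\partial_yv_2=0\) and \(v_2(\pm1)=0\) force \(v_2=0\). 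The mode therefore satisfies the heat equation \(\partial_tv_1=\nu\partial_y^2v_1\) and decays at rate \(\nu\lambda_1\).

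\textbf{Nonzero modes.} For these modes the cross term \(-A\int v_1v_2\) is not sign-definite, so the energy method alone gives decay only when \(\Rey\) is small. For general fixed \(\Rey\) I would argue as follows. Since \(\LC\) is sectorial with compact resolvent (the domain embeds compactly in \(L^2\)), its spectrum consists of eigenvalues. For analytic semigroups the growth bound equals the spectral bound, so it suffices to show every eigenvalue has \(\Rey\lambda<0\). After Fourier transforming in \(x\), the eigenvalue problem for the streamfunction \(\psi\) (with \(\bm v=\nabla^\perp\psi\)) is the Orr--Sommerfeld equation for Couette flow, with \(\psi=\psi'=0\) at \(y=\pm1\).

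\textbf{Main obstacle.} Excluding eigenvalues with \(\Rey\lambda\ge0\) for every wavenumber \(k\ne0\) and every \(\Rey\) is exactly Romanov's theorem \cite{Romanov1973}, which I would invoke rather than reprove. Uniformity over \(k\) still has to be checked. For large \(|k|\), the energy identity combined with \(\|\partial_x\bm v\|\ge|k|\|\bm v\|\) absorbs the cross term \(|A|\|\bm v\|^2\) once \(\nu k^2>|A|\), giving uniform decay. The finitely many remaining \(k\) each have a strictly negative spectral bound.

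\textbf{Conclusion.} For the finitely many remaining modes I would avoid needing uniformity at infinity in \(\lambda\) by using sectoriality: the resolvent is bounded on a sector, and also on the closed right half-plane by compactness, so Gearhart--Pr\"uss gives a uniform constant \(M_{\mathrm C}\). Taking \(\omega_{\mathrm C}\) as the minimum of the finitely many rates gives \eqref{eq:couette-semigroup}. Alternatively, the lemma can simply cite Heck--Kim--Kozono \cite{HeckKimKozono2009}, who prove exactly this exponential stability for fixed viscosity.
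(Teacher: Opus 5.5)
Your proposal is correct. The sectoriality step and the handling of the streamwise mean match the paper: you use relative bound zero of the transport with respect to \(\AS\) and Pazy's perturbation theorem, and then \(\overline v_2=0\) followed by the Dirichlet heat equation for the mean.

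Where you differ is the decay estimate. The paper does not argue spectrally. For \(A\ne0\) it rescales time by \(\tau_{\mathrm C}=|A|t\) and reflects \((x,\bm v)\) when \(A<0\). It then imports the finished semigroup bound for the normalized no-slip Couette linearization with viscosity \(\mu=\nu/|A|\) from Heck--Kim--Kozono, taking \(n=2\), \(l=\pi\), \(q=r=2\). This gives \(M_{\mathrm C}=M(\nu/|A|)\) and \(\omega_{\mathrm C}=|A|\delta(\nu/|A|)\). The case \(A=0\) is handled by the Stokes spectral gap. Your final alternative, citing Heck--Kim--Kozono directly, is exactly this route.

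Your main argument instead reduces the semigroup bound to an eigenvalue-location statement. It combines compact resolvent with analyticity (spectral bound equals growth bound, or Gearhart--Pr\"uss on the Hilbert space), uses Romanov's Orr--Sommerfeld stability as the only hard input, and adds an energy estimate giving the explicit rate \(\nu k^2-|A|/2\) on high modes. This makes clear that only spectral stability of finitely many Fourier blocks is needed. It does not give more explicit constants for the low modes than the citation does, and each block still needs the same trivial rescaling to Romanov's normalization with Reynolds number \(|A|/\nu\).

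One simplification: your uniformity check in \(k\) is correct but unnecessary. \(\LC\) itself is sectorial with compact resolvent on the bounded cell, so only finitely many eigenvalues lie in \(\{\operatorname{Re}\lambda\ge-1\}\). Each such eigenvalue is an eigenvalue of a single Fourier block. Romanov together with the zero-mode heat equation therefore already gives a negative spectral bound for \(\LC\) directly.
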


\begin{proof}
The first-order transport \(Ay\partial_x\) is \(\AS\)-bounded with relative bound zero, while \(Av_2\bm e_1\) is a bounded zero-order term.  Accordingly, the same sectorial perturbation theorem \cite[Chapter~3, Section~3.2]{Pazy1983} gives sectoriality of \(\LC\) on \(L^2_\sigma(\Omega)\) with domain \(\mathcal D(\AS)\).  If \(A=0\), the assertion follows from the spectral gap of the Dirichlet Stokes operator.

Assume \(A\ne0\) and define the rescaled Couette time \(\tau_{\mathrm C}:=|A|t\).  If \(A<0\), let \(\bm R:=\operatorname{diag}(-1,1)\) and define the unitary reflection
\[
 (\mathcal U\bm v)(x,y):=\bm R\,\bm v(-x,y).
\]
Then the reflected and rescaled problem is the normalized plane Couette linearization with viscosity
\[
 \mu:=\frac{\nu}{|A|}>0
\]
on the same periodic layer.  We use the linear estimates proved in Section~2 of Heck, Kim and Kozono \cite[Lemmas~2.3--2.4]{HeckKimKozono2009}.  In their notation, Lemma~2.3 identifies the spectral gap of the perturbed Stokes operator and the subsequent semigroup estimates are collected in Lemma~2.4, which is the linear input for their Theorem~1.1.  Their layer has plates at \(y=\pm1\), and the tangential cell \([-l,l]^{n-1}\) agrees with the present geometry for \(n=2\) and \(l=\pi\).  Taking \(q=r=2\), denote the normalized no-slip velocity semigroup by \(\mathcal S_\mu\).  Then there are constants \(M(\mu)\ge1\) and \(\delta(\mu)>0\) such that
\[
 \|\mathcal S_\mu(\tau_{\mathrm C})\|_{\mathcal L(L^2_\sigma)}
 \le M(\mu)\e^{-\delta(\mu)\tau_{\mathrm C}}.
\]
Scaling back gives \eqref{eq:couette-semigroup} with
\[
 M_{\mathrm C}=M(\nu/|A|),
 \qquad
 \omega_{\mathrm C}=|A|\delta(\nu/|A|).
\]
The estimate also covers the streamwise mean.  Write \(\overline{\bm v}:=\mathscr P_0\bm v\).  Averaging the divergence constraint gives \(\partial_y\overline v_2=0\), and the wall condition implies \(\overline v_2=0\).  The mean horizontal component solves the one-dimensional Dirichlet heat equation with viscosity \(\nu\) and is therefore exponentially damped. 
\end{proof}

Define the bounded charge-to-velocity map
\[
 \mathcal B_{\mathrm{ci}}(a_+,a_-)
 :=\gamma E_0\Pdiv\bigl((a_+-a_-)\bm e_1\bigr).
\]
With respect to
\(
 \Hphase=L^2_\sigma(\Omega)\times L^2(\Omega)^2,
\)
the linearized generator has the triangular form
\[
 \Lzero=
 \begin{pmatrix}
  \LC & \mathcal B_{\mathrm{ci}}\\
  0 & \Li
 \end{pmatrix},
 \qquad
 \mathcal D(\Lzero)=\Dphase.
\]

The diagonal decay estimates and the bounded charge-to-velocity coupling now yield Theorem~\ref{thm:linear-main}.

\begin{proof}[Proof of Theorem~\ref{thm:linear-main}]
The block-diagonal operator \(\operatorname{diag}(\LC,\Li)\) is sectorial on \(\Hphase\) with domain \(\Dphase\), and the off-diagonal map \(\mathcal B_{\mathrm{ci}}\) is bounded on the base space.  The bounded perturbation theorem therefore yields sectoriality of \(\Lzero\) on the same domain.  Put
\[
 \omega_{\mathrm{ion}}:=D_{\min}\lambda_1.
\]
Lemma~\ref{lem:ionic} gives
\[
 \norm{(a_+,a_-)(t)}_{L^2\times L^2}
 \le \left(\frac{D_{\max}}{D_{\min}}\right)^{1/2}
 \e^{-\omega_{\mathrm{ion}}t}
 \norm{(a_+,a_-)(0)}_{L^2\times L^2}.
\]
By Lemma~\ref{lem:couette-semigroup}, the projected velocity satisfies
\[
 \bm v(t)=\e^{t\LC}\bm v_{\mathrm{in}}
 +\gamma E_0\int_0^t
 \e^{(t-s)\LC}\Pdiv\bigl((a_+(s)-a_-(s))\bm e_1\bigr)\dd s.
\]
Fix \(0<\omega_0<\min\{\omega_{\mathrm C},\omega_{\mathrm{ion}}\}\).  Then
\[
 \int_0^t \e^{-\omega_{\mathrm C}(t-s)}\e^{-\omega_{\mathrm{ion}}s}\dd s
 \le C_{\omega_0}\e^{-\omega_0t},
 \qquad t\ge0,
\]
with \(C_{\omega_0}\) independent of \(t\).  The boundedness of \(\Pdiv\) on \(L^2\), together with the velocity and ionic estimates, proves \eqref{eq:main-linear-decay}.
\end{proof}

A fixed graph norm is needed for the interpolation argument.  The next estimate identifies the graph domain of the full linearized generator with the natural \(H^2\)-product space.

\begin{lemma}[Equivalence of graph norms]\label{lem:graph-norm}
There is a constant \(C_{\mathrm g}\ge1\), depending on the fixed physical parameters, such that
\begin{equation}\label{eq:graph-norm-equivalence}
 C_{\mathrm g}^{-1}\|\bm Z\|_{\Dphase}
 \le \|\bm Z\|_{\Hphase}+\|\Lzero\bm Z\|_{\Hphase}
 \le C_{\mathrm g}\|\bm Z\|_{\Dphase},
 \qquad \bm Z\in\Dphase.
\end{equation}
\end{lemma}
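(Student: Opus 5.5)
The upper bound is a term-by-term estimate. The lower bound rests on elliptic regularity for the two principal operators, combined with the relative-bound-zero property of the lower-order terms that was already used for sectoriality.

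\emph{Upper bound.} Write \(\bm Z=(\bm v,a_+,a_-)\in\Dphase\) and expand \(\Lzero\bm Z\) using the triangular form. Each entry is one of the following:
\begin{itemize}
\item a principal term \(\nu\AS\bm v\) or \(D_\pm\AD a_\pm\), bounded in \(L^2\) by \(C\|\cdot\|_{H^2}\);
\item a transport term \(Ay\partial_x\) or \(D_\pm E_0\partial_x\), bounded by \(C(|A|+D_{\max}|E_0|)\|\cdot\|_{H^1}\) since \(|y|\le1\);
\item a zero-order term \(Av_2\bm e_1\), the Poisson reaction \(\kappa_{\mathrm D}D_\pm(a_+-a_-)\), or \(\mathcal B_{\mathrm{ci}}\bm a\), bounded by \(C\|\cdot\|_{L^2}\).
\end{itemize}
The Leray projection \(\Pdiv\) is bounded on \(L^2\), so every term is controlled by \(C\|\bm Z\|_{\Dphase}\). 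Adding \(\|\bm Z\|_{\Hphase}\le\|\bm Z\|_{\Dphase}\) gives the right inequality in \eqref{eq:graph-norm-equivalence}.

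\emph{Lower bound.} First recall the two elliptic estimates:
\begin{itemize}
\item For the Dirichlet Laplacian on the periodic strip, \(\|f\|_{H^2}\le C\|\AD f\|_{L^2}\) on \(\mathcal D(\AD)\). This holds by standard regularity, or directly via Fourier series in \(x\) and ODE estimates in \(y\).
\item For the Stokes operator, \(\|\bm v\|_{H^2}\le C\|\AS\bm v\|_{L^2}\) on \(\mathcal D(\AS)\), by Cattabriga/Stokes regularity in the flat channel.
\end{itemize}
Next isolate the principal parts:
\[
\nu\AS\bm v=-(\Lzero\bm Z)_1-\Pdiv\bigl(Ay\partial_x\bm v+Av_2\bm e_1\bigr)+\mathcal B_{\mathrm{ci}}\bm a,
\]
\[
D_\pm\AD a_\pm=-(\Lzero\bm Z)_\pm+\bigl(\text{transport and reaction terms}\bigr).
\]
Take \(L^2\) norms. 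Bound each first-order term by the interpolation inequality \(\|\nabla f\|_{L^2}\le\varepsilon_{\mathrm{rel}}\|\AD f\|_{L^2}+C_{\varepsilon_{\mathrm{rel}}}\|f\|_{L^2}\) and its Stokes analogue, \(\|\nabla\bm v\|_{L^2}\le\varepsilon_{\mathrm{rel}}\|\AS\bm v\|_{L^2}+C_{\varepsilon_{\mathrm{rel}}}\|\bm v\|_{L^2}\). The Stokes version follows from \(\|\nabla\bm v\|^2=(\AS\bm v,\bm v)\) and Young's inequality. This yields
\[
\nu\|\AS\bm v\|+\textstyle\sum_\pm D_\pm\|\AD a_\pm\|
\le \|\Lzero\bm Z\|_{\Hphase}+C\varepsilon_{\mathrm{rel}}(|A|+D_{\max}|E_0|)\bigl(\|\AS\bm v\|+\textstyle\sum_\pm\|\AD a_\pm\|\bigr)+C_{\varepsilon_{\mathrm{rel}}}\|\bm Z\|_{\Hphase}.
\]
Choose \(\varepsilon_{\mathrm{rel}}\) small relative to \(\min\{\nu,D_{\min}\}\) and absorb the middle term into the left-hand side. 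The elliptic estimates then give the left inequality, with \(C_{\mathrm g}\) depending on the fixed parameters.

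The main obstacle is justifying the Stokes \(H^2\) estimate on \(\mathcal D(\AS)\), i.e.\ that \(\AS\) is an isomorphism onto \(L^2_\sigma\) with \(H^2\) control in this periodic channel. I would cite the standard Stokes regularity theory, since the flat boundary makes it classical. Everything else is routine absorption.
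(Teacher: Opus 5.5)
Your proof is correct and follows the paper's argument: you split off the principal parts $\nu\AS$ and $D_\pm\AD$, show the transport terms have relative bound zero, absorb them, and invoke Dirichlet and Stokes elliptic regularity, with $\mathcal B_{\mathrm{ci}}$, $Av_2\bm e_1$ and the Poisson reaction treated as bounded terms. The differences are cosmetic: you absorb both blocks in one inequality instead of treating them separately and recombining through the triangular structure, and you derive the first-order interpolation bound from $\|\nabla\bm v\|_{L^2}^2=(\AS\bm v,\bm v)$ rather than from $H^1$--$H^2$ interpolation.
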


\begin{proof}
The definitions of \(\LC\), \(\Li\) and the bounded coupling \(\mathcal B_{\mathrm{ci}}\) give the upper bound.  For the reverse bound, fix once and for all a number
\[
 0<\varepsilon_{\mathrm g}<\frac12.
\]
Write
\[
 \Li=-\operatorname{diag}(D_+\AD,D_-\AD)+\mathcal K_{\mathrm{ion}},
\]
where \(\mathcal K_{\mathrm{ion}}\) contains the Couette transports, the tangential electric drifts, and the bounded Poisson reaction matrix.  Elliptic interpolation gives
\[
 \|\mathcal K_{\mathrm{ion}}\bm a\|_{L^2\times L^2}
 \le \varepsilon_{\mathrm g}
 \|\operatorname{diag}(D_+\AD,D_-\AD)\bm a\|_{L^2\times L^2}
 +C_{\varepsilon_{\mathrm g}}\|\bm a\|_{L^2\times L^2},
 \qquad \bm a=(a_+,a_-).
\]
Since
\[
 \operatorname{diag}(D_+\AD,D_-\AD)\bm a
 =-\Li\bm a+\mathcal K_{\mathrm{ion}}\bm a,
\]
we obtain
\[
 (1-\varepsilon_{\mathrm g})
 \|\operatorname{diag}(D_+\AD,D_-\AD)\bm a\|_{L^2\times L^2}
 \le \|\Li\bm a\|_{L^2\times L^2}
 +C_{\varepsilon_{\mathrm g}}\|\bm a\|_{L^2\times L^2}.
\]
Because \(1-\varepsilon_{\mathrm g}>1/2\), the Dirichlet elliptic estimate yields
\[
 \|a_+\|_{H^2}+\|a_-\|_{H^2}
 \le C\bigl(
 \|\Li(a_+,a_-)\|_{L^2\times L^2}
 +\|a_+\|_{L^2}+\|a_-\|_{L^2}\bigr).
\]
For the velocity block, write
\[
 \LC=-\nu\AS+\mathcal K_{\mathrm C},
 \qquad
 \mathcal K_{\mathrm C}\bm v
 =-\Pdiv(Ay\partial_x\bm v+Av_2\bm e_1).
\]
The Stokes elliptic estimate and first-order interpolation, with the same
\(\varepsilon_{\mathrm g}\), give
\[
 \|\mathcal K_{\mathrm C}\bm v\|_{L^2}
 \le \varepsilon_{\mathrm g}\nu\|\AS\bm v\|_{L^2}
 +C_{\varepsilon_{\mathrm g}}\|\bm v\|_{L^2}.
\]
Using \(\nu\AS\bm v=-\LC\bm v+\mathcal K_{\mathrm C}\bm v\), we obtain
\[
 (1-\varepsilon_{\mathrm g})\nu\|\AS\bm v\|_{L^2}
 \le \|\LC\bm v\|_{L^2}
 +C_{\varepsilon_{\mathrm g}}\|\bm v\|_{L^2}.
\]
The Stokes elliptic estimate now gives
\[
 \|\bm v\|_{H^2}
 \le C\bigl(\|\LC\bm v\|_{L^2}+\|\bm v\|_{L^2}\bigr).
\]
For \(\bm Z=(\bm v,a_+,a_-)\),
\[
 \Li(a_+,a_-)
\]
is the ionic component of \(\Lzero\bm Z\), while
\[
 \LC\bm v
 =(\Lzero\bm Z)_{\mathrm{fluid}}
 -\mathcal B_{\mathrm{ci}}(a_+,a_-).
\]
Since \(\mathcal B_{\mathrm{ci}}\) is bounded on \(L^2(\Omega)^2\), the last three estimates imply the lower bound in \eqref{eq:graph-norm-equivalence}.
\end{proof}

\section{Nonlinear stability}\label{sec:nonlinear-proof}

We write the nonlinear perturbation equations in the variables
\[
 \bm Z=(\bm v,a_+,a_-),
 \qquad
 q=a_+-a_-,
 \qquad
 \varphi=\Kop q.
\]
The equations read
\begin{equation}\label{eq:nonlinear-system}
\left\{
\begin{aligned}
 \partial_t\bm v+Ay\partial_x\bm v+Av_2\bm e_1+\nabla\pi
 &=\nu\Delta\bm v+\gamma E_0q\bm e_1
 -\bm v\cdot\nabla\bm v-\gamma q\nabla\varphi,\\
 \Div\bm v&=0,\\
 \partial_t a_+ +Ay\partial_xa_+
 &=D_+\Delta a_+ +D_+c_0\Delta\varphi-D_+E_0\partial_xa_+\\
 &\quad-\bm v\cdot\nabla a_+ +D_+\Div(a_+\nabla\varphi),\\
 \partial_t a_- +Ay\partial_xa_-
 &=D_-\Delta a_- -D_-c_0\Delta\varphi+D_-E_0\partial_xa_-\\
 &\quad-\bm v\cdot\nabla a_- -D_-\Div(a_-\nabla\varphi),\\
 -\lambda_{\mathrm D}^2\Delta\varphi&=a_+-a_-,
 \qquad \varphi|_{y=\pm1}=0.
\end{aligned}
\right.
\end{equation}
Applying \(\Pdiv\) to the momentum equation, we obtain
\[
 \partial_t\bm Z=\Lzero\bm Z+\Nzero(\bm Z),
\]
where
\[
 \Nzero(\bm Z)=
 \begin{pmatrix}
 -\Pdiv(\bm v\cdot\nabla\bm v)-\gamma\Pdiv(q\nabla\Kop q)\\[1mm]
 -\bm v\cdot\nabla a_+ +D_+\Div(a_+\nabla\Kop q)\\[1mm]
 -\bm v\cdot\nabla a_- -D_-\Div(a_-\nabla\Kop q)
 \end{pmatrix}.
\]

\begin{lemma}[Quadratic mapping property]\label{lem:quadratic}
Let \(\theta\in(3/4,1)\).  The map \(\Nzero\) belongs to \(C^1(\Xtheta,\Hphase)\) and satisfies
\begin{equation}\label{eq:quadratic}
 \Nzero(0)=0,
 \qquad
 D\Nzero(0)=0,
 \qquad
 \norm{\Nzero(\bm Z)}_{\Hphase}
 \le C\norm{\bm Z}_{\Xtheta}^2.
\end{equation}
For all \(\bm Z,\widetilde{\bm Z}\in\Xtheta\),
\begin{equation}\label{eq:quadratic-difference}
 \norm{\Nzero(\bm Z)-\Nzero(\widetilde{\bm Z})}_{\Hphase}
 \le C\bigl(\|\bm Z\|_{\Xtheta}+\|\widetilde{\bm Z}\|_{\Xtheta}\bigr)
 \norm{\bm Z-\widetilde{\bm Z}}_{\Xtheta}.
\end{equation}
In particular, its Lipschitz constant on a ball of radius \(R\) is at most \(2CR\), with \(C\) independent of \(R\).
\end{lemma}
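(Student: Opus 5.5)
The plan is to reduce everything to bilinear estimates. Write \(\Nzero(\bm Z)=\mathcal Q(\bm Z,\bm Z)\), where \(\mathcal Q\) is the symmetric-free bilinear map
\[
 \mathcal Q(\bm Z,\widetilde{\bm Z})=
 \begin{pmatrix}
 -\Pdiv(\bm v\cdot\nabla\widetilde{\bm v})-\gamma\Pdiv(q\nabla\Kop\widetilde q)\\
 -\bm v\cdot\nabla\widetilde a_+ +D_+\Div(a_+\nabla\Kop\widetilde q)\\
 -\bm v\cdot\nabla\widetilde a_- -D_-\Div(a_-\nabla\Kop\widetilde q)
 \end{pmatrix}.
\]
Once we prove the bound \(\|\mathcal Q(\bm Z,\widetilde{\bm Z})\|_{\Hphase}\le C\|\bm Z\|_{\Xtheta}\|\widetilde{\bm Z}\|_{\Xtheta}\), all assertions follow algebraically. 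We have \(\Nzero(0)=0\) and \(\|\Nzero(\bm Z)\|\le C\|\bm Z\|^2\). The identity \(\Nzero(\bm Z)-\Nzero(\widetilde{\bm Z})=\mathcal Q(\bm Z,\bm Z-\widetilde{\bm Z})+\mathcal Q(\bm Z-\widetilde{\bm Z},\widetilde{\bm Z})\) gives \eqref{eq:quadratic-difference}, and hence the Lipschitz constant \(2CR\) on a ball of radius \(R\). Finally, a bounded bilinear map is smooth: \(D\Nzero(\bm Z)\bm H=\mathcal Q(\bm Z,\bm H)+\mathcal Q(\bm H,\bm Z)\), which depends continuously on \(\bm Z\) in operator norm and vanishes at \(\bm Z=0\). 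This gives \(C^1\) and \(D\Nzero(0)=0\).

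For the bilinear bound we use \eqref{eq:Xembed}: every component of an element of \(\Xtheta\) lies in \(H^{s}\) with \(s=2\theta\in(3/2,2)\), so it is in \(L^\infty\), and its gradient is in \(H^{s-1}\subset L^p\) for some \(p>4\) (any \(p<\infty\) when \(s-1\ge1\) fails; in 2D, \(H^{s-1}\hookrightarrow L^{2/(2-s)}\) with \(2/(2-s)>4\)). The terms split as follows.

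The transport terms \(\bm v\cdot\nabla\widetilde{\bm v}\) and \(\bm v\cdot\nabla\widetilde a_\pm\) are bounded in \(L^2\) by \(\|\bm v\|_{L^\infty}\|\nabla\widetilde{\bm v}\|_{L^2}\). The Leray projection \(\Pdiv\) is bounded on \(L^2\).

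For the Poisson terms, note that \(\Kop\) maps \(L^2\) to \(H^2\cap H^1_0\) by the Dirichlet elliptic estimate on the strip. Hence \(\nabla\Kop\widetilde q\in H^1\subset L^p\) for all \(p<\infty\), and in fact \(\widetilde q\in H^s\) gives \(\Kop\widetilde q\in H^{s+2}\), so \(\nabla\Kop\widetilde q\in H^{s+1}\subset W^{1,\infty}\). Then \(\|q\nabla\Kop\widetilde q\|_{L^2}\le\|q\|_{L^2}\|\nabla\Kop\widetilde q\|_{L^\infty}\). To be safe without higher elliptic regularity up to the boundary, we use instead \(\|q\|_{L^\infty}\|\nabla\Kop\widetilde q\|_{L^2}\). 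The divergence term expands as
\[
 \Div(a_\pm\nabla\Kop\widetilde q)=\nabla a_\pm\cdot\nabla\Kop\widetilde q-\lambda_{\mathrm D}^{-2}a_\pm\widetilde q .
\]
The second piece is bounded by \(\|a_\pm\|_{L^\infty}\|\widetilde q\|_{L^2}\). For the first, apply Hölder with \(\|\nabla a_\pm\|_{L^4}\|\nabla\Kop\widetilde q\|_{L^4}\), using \(H^{s-1}\subset L^4\) since \(s-1>1/2\), and \(H^1\subset L^4\) for \(\nabla\Kop\widetilde q\), which is controlled by \(\|\widetilde q\|_{L^2}\).

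The main point needing care is this last product, because it is the only place where the \(H^2\) elliptic regularity of \(\Kop\) on the periodic strip is genuinely used. Everything else is a direct consequence of the embedding \eqref{eq:Xembed} with \(\theta>3/4\). We also note that real-valuedness plays no role, since the bilinear estimates hold on the complexified spaces.
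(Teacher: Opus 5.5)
Your proposal is correct and follows essentially the same route as the paper. Both write \(\Nzero(\bm Z)=\mathcal Q(\bm Z,\bm Z)\) for a bounded bilinear \(\mathcal Q\), bound it term by term using \(\Xtheta\hookrightarrow H^{2\theta}\), use the same telescoping identity for the difference, and use \(D\Nzero(\bm Z)\bm W=\mathcal Q(\bm Z,\bm W)+\mathcal Q(\bm W,\bm Z)\) for the \(C^1\) property. The only variation is in \(\Div(a_\pm\nabla\Kop\widetilde q)\): the paper uses the shifted estimate \(\|\Kop q\|_{H^{\sigma+2}}\le C\|q\|_{H^\sigma}\) to put \(\nabla\Kop q\) in \(L^\infty\), whereas your \(L^4\)--\(L^4\) H\"older bound needs only \(\Kop:L^2\to H^2\cap H_0^1\), which is equally valid and slightly more economical.
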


\begin{proof}
Put \(r_\theta:=2\theta>3/2\).  The embedding \eqref{eq:Xembed} gives
\[
 \|\bm v\|_{H^{r_\theta}}
 +\|a_+\|_{H^{r_\theta}}
 +\|a_-\|_{H^{r_\theta}}
 \le C\|\bm Z\|_{\Xtheta}.
\]
Since \(q=a_+-a_-\), Dirichlet elliptic regularity yields
\[
 \|\Kop q\|_{H^{\sigma+2}}
 \le C\lambda_{\mathrm D}^{-2}\|q\|_{H^\sigma},
 \qquad 0\le\sigma\le r_\theta.
\]
In two dimensions,
\(
 H^{r_\theta}\hookrightarrow L^\infty
\)
and
\(
 H^{r_\theta+2}\hookrightarrow W^{1,\infty}
\).
Consequently,
\[
 \|\bm v\cdot\nabla\bm v\|_{L^2}
 \le C\|\bm v\|_{H^{r_\theta}}^2,
 \qquad
 \|q\nabla\Kop q\|_{L^2}
 \le C\lambda_{\mathrm D}^{-2}\|q\|_{H^{r_\theta}}^2.
\]
For each sign,
\[
 \|\bm v\cdot\nabla a_\pm\|_{L^2}
 \le C\|\bm v\|_{H^{r_\theta}}\|a_\pm\|_{H^{r_\theta}},
\]
and
\begin{align*}
 \|\Div(a_\pm\nabla\Kop q)\|_{L^2}
 &\le \|\nabla a_\pm\|_{L^2}\|\nabla\Kop q\|_{L^\infty}
 +\|a_\pm\|_{L^\infty}\|\Delta\Kop q\|_{L^2}\\
 &\le C\lambda_{\mathrm D}^{-2}
 \|a_\pm\|_{H^{r_\theta}}\|q\|_{H^{r_\theta}}.
\end{align*}
These estimates prove \eqref{eq:quadratic}.  More precisely, there is a continuous bilinear map \(\mathcal Q:\Xtheta\times\Xtheta\longrightarrow\Hphase\) with \(\Nzero(\bm Z)=\mathcal Q(\bm Z,\bm Z)\).  Expanding
\[
 \mathcal Q(\bm Z,\bm Z)-\mathcal Q(\widetilde{\bm Z},\widetilde{\bm Z})
 =\mathcal Q(\bm Z-\widetilde{\bm Z},\bm Z)
 +\mathcal Q(\widetilde{\bm Z},\bm Z-\widetilde{\bm Z})
\]
gives \eqref{eq:quadratic-difference}.  Its derivative is
\(D\Nzero(\bm Z)\bm W=\mathcal Q(\bm W,\bm Z)+\mathcal Q(\bm Z,\bm W)\),
which is continuous in \(\bm Z\) and vanishes at \(\bm Z=0\).
\end{proof}

Lemma~\ref{lem:graph-norm} identifies \(\Xtheta\), up to equivalent norms, with
\(
[\Hphase,\mathcal D(\Lzero)]_\theta
\).
Analyticity gives
\[
 \|\e^{t\Lzero}\|_{\mathcal L(\Hphase,\mathcal D(\Lzero))}
 \le Ct^{-1},
 \qquad 0<t\le1,
\]
while exponential stability and the commutation
\(
\Lzero\e^{t\Lzero}x=\e^{t\Lzero}\Lzero x
\)
for \(x\in\mathcal D(\Lzero)\) give exponential decay on both \(\Hphase\) and the graph domain for \(t\ge1\).  Interpolating the domain and range spaces and decreasing the decay rate if necessary, yields a number \(\omega_1>0\) for which
\begin{equation}\label{eq:semigroup-estimates}
\left\{
\begin{aligned}
 \norm{\e^{t\Lzero}}_{\mathcal L(\Hphase,\Xtheta)}
 &\le Ct^{-\theta}\e^{-\omega_1t},
 &&t>0,\\
 \norm{\e^{t\Lzero}}_{\mathcal L(\Xtheta)}
 &\le C\e^{-\omega_1t},
 &&t\ge0.
\end{aligned}
\right.
\end{equation}
Let \(\theta<\beta<1\) and set
\(
 \mathscr X_\beta:=[\Hphase,\Dphase]_\beta
\).
Interpolation also gives the difference estimate used in the positive-time regularity argument.  Indeed,
\[
 \|\e^{h\Lzero}-\Id\|_{\mathcal L(\Hphase)}
 \le C,
 \qquad
 \|(\e^{h\Lzero}-\Id)x\|_{\Hphase}
 \le Ch\|x\|_{\Dphase},
 \qquad 0<h\le1.
\]
Interpolating these two bounds gives
\[
 \|(\e^{h\Lzero}-\Id)x\|_{\Hphase}
 \le Ch^\beta\|x\|_{\mathscr X_\beta}.
\]
On the other hand, \(\e^{h\Lzero}-\Id\) is uniformly bounded on \(\mathscr X_\beta\).  Interpolation of the range spaces, together with the complex-interpolation reiteration theorem \cite[Section~4.6]{BerghLofstrom1976},
\(
[\Hphase,\mathscr X_\beta]_{\theta/\beta}=\Xtheta
\)
(which applies because \(0<\theta/\beta<1\)),
then yields
\begin{equation}\label{eq:interpolation-difference}
 \|(\e^{h\Lzero}-\Id)x\|_{\Xtheta}
 \le C h^{\beta-\theta}\|x\|_{\mathscr X_\beta},
 \qquad 0<h\le1.
\end{equation}

The following analytic-semigroup regularity lemma will be used in both nonlinear stability proofs.

\begin{lemma}[H\"older forcing and the graph domain]\label{lem:holder-forcing}
Let \(\mathcal G\) generate an analytic \(C_0\)-semigroup \(\mathcal S(t)\) on a Banach space \(\mathscr V\).  Suppose \(\bm F\in C^\alpha([a,b];\mathscr V)\) for some \(0<\alpha<1\), and let \(\bm w_a\in\mathscr V\).  Then
\[
 \bm w(t)=\mathcal S(t-a)\bm w_a
 +\int_a^t\mathcal S(t-s)\bm F(s)\dd s
\]
belongs to \(C((a,b];\mathcal D(\mathcal G))\cap C^1((a,b);\mathscr V)\), where the domain carries its graph norm, and satisfies \(\bm w'=\mathcal G\bm w+\bm F\) on \((a,b)\).
\end{lemma}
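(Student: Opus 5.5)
The plan is to use the classical argument for analytic semigroups with Hölder-continuous inhomogeneities, as in Pazy, Chapter~4, Theorem~3.2 and Corollary~3.3, or Lunardi's theory of optimal regularity.

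\textbf{Reduction.} By translating time, we may take \(a=0\). The homogeneous part \(\mathcal S(t)\bm w_a\) is handled directly by analyticity. For \(t>0\) it lies in \(\mathcal D(\mathcal G)\), and \(t\mapsto \mathcal S(t)\bm w_a\) is \(C^\infty\) from \((0,b]\) into \(\mathscr V\) with derivative \(\mathcal G\mathcal S(t)\bm w_a\). Hence it belongs to \(C((0,b];\mathcal D(\mathcal G))\cap C^1((0,b);\mathscr V)\) and solves the homogeneous equation. It therefore suffices to treat the convolution
\[
 \bm v(t)=\int_0^t\mathcal S(t-s)\bm F(s)\dd s .
\]

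\textbf{Splitting the convolution.} Decompose
\[
 \bm v(t)=\int_0^t\mathcal S(t-s)\bigl(\bm F(s)-\bm F(t)\bigr)\dd s+\int_0^t\mathcal S(r)\bm F(t)\dd r=:\bm v_1(t)+\bm v_2(t).
\]
For \(\bm v_2\), use the standard identity \(\int_0^t\mathcal S(r)x\dd r\in\mathcal D(\mathcal G)\) with \(\mathcal G\int_0^t\mathcal S(r)x\dd r=\mathcal S(t)x-x\). For \(\bm v_1\), analyticity gives \(\|\mathcal G\mathcal S(t-s)\|\le C(t-s)^{-1}\) on bounded intervals, and the Hölder bound gives \(\|\bm F(s)-\bm F(t)\|\le C|t-s|^\alpha\). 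The integrand \(\mathcal G\mathcal S(t-s)(\bm F(s)-\bm F(t))\) is therefore bounded by \(C(t-s)^{\alpha-1}\), which is integrable. Since \(\mathcal G\) is closed, \(\bm v_1(t)\in\mathcal D(\mathcal G)\), and we obtain the formula
\[
 \mathcal G\bm v(t)=\int_0^t\mathcal G\mathcal S(t-s)\bigl(\bm F(s)-\bm F(t)\bigr)\dd s+\mathcal S(t)\bm F(t)-\bm F(t).
\]

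\textbf{Continuity of \(\mathcal G\bm v\) (main obstacle).} The central step is to show that \(t\mapsto\mathcal G\bm v(t)\) is continuous on \((0,b]\) in \(\mathscr V\). The terms \(\mathcal S(t)\bm F(t)-\bm F(t)\) are continuous because the semigroup is strongly continuous and \(\bm F\) is continuous. For the singular integral, compare two times \(t'<t\) in \([\delta,b]\). Split the difference into three pieces: the region \(s\in[t',t]\), which contributes \(O(|t-t'|^\alpha)\); the variation of \(\bm F(t)\) versus \(\bm F(t')\) inside the integrand; and the kernel difference \(\mathcal G(\mathcal S(t-s)-\mathcal S(t'-s))\). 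The kernel difference is controlled by \(\|\mathcal G^2\mathcal S(r)\|\le Cr^{-2}\), integrated as \(\int_{t'-s}^{t-s}\). Together with the \(\alpha\)-Hölder weight, this yields an \(O(|t-t'|^\alpha\log)\)-type bound via the standard estimate of Pazy, Lemma~4.3.4. I would cite this known computation rather than redo it. Continuity of \(\bm v\) itself in \(\mathscr V\) is immediate, so \(\bm v\in C((0,b];\mathcal D(\mathcal G))\) with the graph norm.

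\textbf{Differentiability and the equation.} It remains to show that \(\bm v\) is differentiable in \(\mathscr V\) with \(\bm v'=\mathcal G\bm v+\bm F\). Compute the difference quotient
\[
 h^{-1}\bigl(\bm v(t+h)-\bm v(t)\bigr)=h^{-1}(\mathcal S(h)-I)\bm v(t)+h^{-1}\int_t^{t+h}\mathcal S(t+h-s)\bm F(s)\dd s .
\]
Since \(\bm v(t)\in\mathcal D(\mathcal G)\), the first term tends to \(\mathcal G\bm v(t)\). The second term tends to \(\bm F(t)\) by strong continuity of the semigroup and continuity of \(\bm F\). Left derivatives are handled similarly. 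The resulting derivative \(\mathcal G\bm v+\bm F\) is continuous on \((0,b)\), so \(\bm v\in C^1((0,b);\mathscr V)\). Adding back the homogeneous part concludes the proof.
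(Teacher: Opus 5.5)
Your proposal is correct and follows essentially the same route as the paper: the split $\bm F(s)=\bm F(t)+[\bm F(s)-\bm F(t)]$, the bound $\|\mathcal G\mathcal S(r)\|\le C/r$ together with closedness of $\mathcal G$ to place the convolution in $\mathcal D(\mathcal G)$, continuity of $\mathcal G\bm w$, and then one-sided difference quotients, with the left quotient relying on the graph-norm continuity you established. The only variation is in the continuity step: you use the quantitative kernel-difference estimate with $\|\mathcal G^2\mathcal S(r)\|\le Cr^{-2}$, which actually gives a clean $O(|t-t'|^\alpha)$ bound with no logarithm, whereas the paper cuts off the region $0<t-s<\delta$ at cost $C\delta^\alpha$ and uses continuity away from the diagonal.
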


\begin{proof}
Analyticity gives \(\|\mathcal G\mathcal S(r)\|_{\mathcal L(\mathscr V)}\le C/r\) for \(r>0\).  Fix \(t\in(a,b]\) and write \(\bm F(s)=\bm F(t)+[\bm F(s)-\bm F(t)]\).  For the constant part,
\[
 \mathcal G\int_a^t\mathcal S(t-s)\bm F(t)\dd s
 =[\mathcal S(t-a)-\Id]\bm F(t).
\]
For the H\"older remainder define
\[
 I_\varepsilon(t)=\int_a^{t-\varepsilon}\mathcal S(t-s)[\bm F(s)-\bm F(t)]\dd s,
 \qquad 0<\varepsilon<t-a.
\]
Then \(I_\varepsilon(t)\in\mathcal D(\mathcal G)\) and
\[
 \mathcal G I_\varepsilon(t)
 =\int_a^{t-\varepsilon}\mathcal G\mathcal S(t-s)[\bm F(s)-\bm F(t)]\dd s,
\]
while
\[
 \|\mathcal G\mathcal S(t-s)[\bm F(s)-\bm F(t)]\|_{\mathscr V}
 \le C[\bm F]_{C^\alpha}(t-s)^{\alpha-1}.
\]
Since \(\alpha>0\), both \(I_\varepsilon(t)\) and \(\mathcal G I_\varepsilon(t)\) converge in \(\mathscr V\) as \(\varepsilon\to0\).  Closedness of \(\mathcal G\) therefore gives
\begin{equation}\label{eq:holder-generator-identity}
 \mathcal G\bm w(t)
 =\mathcal G\mathcal S(t-a)\bm w_a
 +[\mathcal S(t-a)-\Id]\bm F(t)
 +\int_a^t\mathcal G\mathcal S(t-s)[\bm F(s)-\bm F(t)]\dd s.
\end{equation}
The integral in \eqref{eq:holder-generator-identity} is continuous in \(t\): away from \(s=t\) this follows from strong continuity, while the contribution from \(0<t-s<\delta\) is bounded by \(C\delta^\alpha\).  Hence \(\bm w\in C((a,b];\mathcal D(\mathcal G))\).  Finally,
\[
 \frac{\mathcal S(h)-\Id}{h}\bm z
 =\frac1h\int_0^h\mathcal S(r)\mathcal G\bm z\dd r,
 \qquad \bm z\in\mathcal D(\mathcal G),
\]
together with the mild increment formula gives the right and left derivatives
\(\bm w'=\mathcal G\bm w+\bm F\).  Their continuity follows from the graph-norm continuity just proved.
\end{proof}

We next apply the preceding semigroup estimates and Lemma~\ref{lem:holder-forcing} to prove Theorem~\ref{thm:nonlinear-main}.

\begin{proof}[Proof of Theorem~\ref{thm:nonlinear-main}]
Fix \(0<\omega<\omega_1\) and define
\[
 \mathscr Y_\omega
 :=\left\{\bm Z\in C([0,\infty);\Xtheta):
 \sup_{t\ge0}\e^{\omega t}\|\bm Z(t)\|_{\Xtheta}<\infty\right\}.
\]
Equip this space with the norm
\[
 \|\bm Z\|_{\mathscr Y_\omega}
 :=\sup_{t\ge0}\e^{\omega t}\|\bm Z(t)\|_{\Xtheta}.
\]
With this weighted supremum norm, \(\mathscr Y_\omega\) is a Banach space.  For \(\bm Z\in\mathscr Y_\omega\), let
\[
 (\Phi\bm Z)(t)
 :=\e^{t\Lzero}\bm Z_{\mathrm{in}}
 +\int_0^t\e^{(t-s)\Lzero}\Nzero(\bm Z(s))\dd s.
\]
The map \(\Phi\) is well defined on \(\mathscr Y_\omega\).  Indeed, \(\Nzero(\bm Z(\cdot))\) is continuous as an \(\Hphase\)-valued map, and the singularity \((t-s)^{-\theta}\) in \eqref{eq:semigroup-estimates} is integrable because \(\theta<1\).  Together, strong continuity of the semigroup on \(\Xtheta\) and the estimate
\[
 \left\|
 \int_0^t \e^{(t-s)\Lzero}\Nzero(\bm Z(s))\dd s
 \right\|_{\Xtheta}
 \le C t^{1-\theta}
 \sup_{0\le s\le t}\|\Nzero(\bm Z(s))\|_{\Hphase},
\]
show that \(\Phi\bm Z\in C([0,\infty);\Xtheta)\).  Here strong continuity on \(\Xtheta\) follows first on the dense subspace \(\Dphase\) and then on all of \(\Xtheta\) from the uniform short-time bound in \eqref{eq:semigroup-estimates}.

By Lemma~\ref{lem:quadratic} and \eqref{eq:semigroup-estimates},
\[
 \|\Phi\bm Z\|_{\mathscr Y_\omega}
 \le C_0\|\bm Z_{\mathrm{in}}\|_{\Xtheta}
 +C_1\|\bm Z\|_{\mathscr Y_\omega}^2,
\]
because
\[
 \int_0^\infty r^{-\theta}
 \e^{-(\omega_1-\omega)r}\dd r<\infty.
\]
Likewise, on the closed ball of radius \(R\),
\[
 \|\Phi\bm Z-\Phi\widetilde{\bm Z}\|_{\mathscr Y_\omega}
 \le C_2R\|\bm Z-\widetilde{\bm Z}\|_{\mathscr Y_\omega}.
\]
Choose \(R=2C_0\|\bm Z_{\mathrm{in}}\|_{\Xtheta}\).  After decreasing \(\varepsilon_*\) so that \(C_1R\le1/2\) and \(C_2R<1\), the map \(\Phi\) sends the closed ball of radius \(R\) into itself and is a strict contraction there.  Both \(\e^{t\Lzero}\) and \(\Nzero\) preserve real-valued states, so \(\Phi\) preserves the real subspace of \(\mathscr Y_\omega\).  Hence real-valued initial data produce a real-valued fixed point.  Banach's fixed-point theorem therefore gives a global mild solution in \(\mathscr Y_\omega\), and the definition of the weighted norm yields \eqref{eq:main-nonlinear-decay}.

We next verify uniqueness in the full class stated in Theorem~\ref{thm:nonlinear-main}.  Let \(\bm Z_1,\bm Z_2\in C([0,T];\Xtheta)\) be two mild solutions with the same initial data, and define
\[
 M_T:=\max_{j=1,2}\sup_{0\le t\le T}\|\bm Z_j(t)\|_{\Xtheta}.
\]
Lemma~\ref{lem:quadratic} and \eqref{eq:semigroup-estimates} then give
\[
 \|\bm Z_1(t)-\bm Z_2(t)\|_{\Xtheta}
 \le C M_T\int_0^t (t-s)^{-\theta}
 \|\bm Z_1(s)-\bm Z_2(s)\|_{\Xtheta}\dd s.
\]
Taking the supremum on \([0,t]\) gives
\[
 \sup_{0\le s\le t}\|\bm Z_1(s)-\bm Z_2(s)\|_{\Xtheta}
 \le \frac{CM_T}{1-\theta}\,t^{1-\theta}
 \sup_{0\le s\le t}\|\bm Z_1(s)-\bm Z_2(s)\|_{\Xtheta}.
\]
Choose \(\delta_T>0\) so that \(CM_T\delta_T^{1-\theta}/(1-\theta)\le1/2\).  The two solutions coincide on \([0,\delta_T]\).  Because both solutions remain in the same \(\Xtheta\)-ball of radius \(M_T\), the local Lipschitz constant is unchanged when the mild equation is restarted.  Finitely many intervals of length \(\delta_T\) therefore cover \([0,T]\).  This proves uniqueness on every bounded time interval.

To obtain positive-time regularity, fix \(\beta\in(\theta,1)\).  The estimate
\[
 \|\e^{t\Lzero}\|_{\mathcal L(\Hphase,\mathscr X_\beta)}
 \le C_\beta t^{-\beta}\e^{-\omega_1t}
\]
and the variation-of-constants formula restarted at \(t_0/2>0\) show that
\[
 \sup_{t\in[t_0,T]}\|\bm Z(t)\|_{\mathscr X_\beta}<\infty
 \qquad (0<t_0<T<\infty).
\]
For \(t,t+h\in[t_0,T]\), write
\[
 \bm Z(t+h)-\bm Z(t)
 =(\e^{h\Lzero}-\Id)\bm Z(t)
 +\int_t^{t+h}\e^{(t+h-s)\Lzero}
 \Nzero(\bm Z(s))\dd s.
\]
The first term on the right-hand side is bounded by \(C_{t_0,T}h^{\beta-\theta}\) through
\eqref{eq:interpolation-difference}.  For the integral term, the first estimate in
\eqref{eq:semigroup-estimates} and boundedness of
\(\Nzero(\bm Z(\cdot))\) on \([t_0,T]\) give
\[
 \left\|
 \int_t^{t+h}\e^{(t+h-s)\Lzero}
 \Nzero(\bm Z(s))\dd s
 \right\|_{\Xtheta}
 \le C_{t_0,T}h^{1-\theta}.
\]
Because \(\beta<1\), one has \(\beta-\theta<1-\theta\), and therefore
\[
 \|\bm Z(t+h)-\bm Z(t)\|_{\Xtheta}
 \le C_{t_0,T}h^{\beta-\theta}.
\]
It follows that \(t\mapsto\Nzero(\bm Z(t))\) is locally H\"older continuous as an
\(\Hphase\)-valued function.  Apply Lemma~\ref{lem:holder-forcing} on a compact interval starting at a positive time, with \(\mathcal G=\Lzero\) and \(\bm F=\Nzero(\bm Z)\).  Since the starting time can be chosen below any given \(t>0\), it follows that \(\bm Z(t)\in\mathcal D(\Lzero)\) for all \(t>0\) and \(\bm Z'\in C((0,\infty);\Hphase)\).  Moreover, \(\Nzero(\bm Z(\cdot))\in C((0,\infty);\Hphase)\), so
\[
 \Lzero\bm Z=\bm Z'-\Nzero(\bm Z)
\]
is continuous as an \(\Hphase\)-valued function.  Therefore \(\bm Z\) is continuous in the graph norm of \(\Lzero\), and Lemma~\ref{lem:graph-norm} yields
\[
 \bm Z\in C((0,\infty);\Dphase)
 \cap C^1((0,\infty);\Hphase).
\]

Finally, recall
\[
 c_\pm=c_0+a_\pm.
\]
Combining the embedding \eqref{eq:Xembed} with \eqref{eq:main-nonlinear-decay} yields
\[
 \|a_\pm(t)\|_{L^\infty}
 \le C_{\mathrm{emb}}C\e^{-\omega t}
 \|\bm Z_{\mathrm{in}}\|_{\Xtheta}.
\]
Choosing the initial-data threshold additionally so that
\(C_{\mathrm{emb}}C\varepsilon_*\le c_0/2\) ensures
\(c_\pm\ge c_0/2\) for \(t\ge0\).  Since
\[
 \bm Z(t)\in\Dphase,\qquad \Nzero(\bm Z(t))\in\Hphase,\qquad t>0,
\]
we apply \(\Id-\Pdiv\) to the unprojected momentum equation and obtain
\[
 \nabla\pi
 =(\Id-\Pdiv)\Bigl[
 \nu\Delta\bm v-Ay\partial_x\bm v-Av_2\bm e_1
 +\gamma E_0q\bm e_1
 -\bm v\cdot\nabla\bm v-\gamma q\nabla\varphi
 \Bigr]\in L^2(\Omega;\R^2).
\]
For \(t>0\), the Helmholtz decomposition determines a periodic
\(\pi(t)\in H^1(\Omega)\) up to a spatial constant.  We select the unique representative satisfying
\[
 \int_\Omega\pi(t)\,\dd x\dd y=0,\qquad t>0.
\]
The divergence constraint is built into \(\Xtheta\), while \(-\lambda_{\mathrm D}^2\Delta\varphi=q\) holds by the definition \(\varphi=\Kop q\).  Accordingly, the projected mild solution satisfies the full perturbation system \eqref{eq:nonlinear-system} in \(L^2\) for \(t>0\), with the boundary conditions understood in the trace sense.
\end{proof}

\section{Ionic enhanced dissipation and equal-diffusivity factorization}\label{sec:ED-proof}

This section quantifies the decay of the nonzero streamwise ionic modes in the strong-shear regime and explains the sharper structure that appears when the two diffusivities coincide.  Enhanced dissipation near Couette flow is well established for hydrodynamic perturbations.  Wei and Zhang proved nonlinear enhanced dissipation for the two-dimensional Couette problem in \cite{WeiZhang2023}, and later obtained a finite-channel stability threshold with enhanced-dissipation estimates under Navier-slip boundary conditions in \cite{WeiZhang2026}.  In the present ionic problem, the scalar Dirichlet Couette estimate supplies the mixing input.  For unequal diffusivities, the two species have different scalar generators and the Poisson reaction is controlled by Duhamel's formula.  When \(D_+=D_-\), the common scalar generator can instead be separated from a contractive drift--reaction factor.  We first derive the scalar nonzero-mode estimate used in both cases and then prove the two conclusions of Theorem~\ref{thm:ED-main}.

\begin{proof}[Proof of Theorem~\ref{thm:ED-main}]
Recall
\[
 \kappa_{\mathrm D}=\frac{c_0}{\lambda_{\mathrm D}^2},
 \qquad
 \bm a=\begin{pmatrix}a_+\\ a_-\end{pmatrix}.
\]
Define the diagonal advection--diffusion operator
\[
 \mathscr S_0
 :=\begin{pmatrix}
 D_+\Delta-Ay\partial_x-D_+E_0\partial_x&0\\
 0&D_-\Delta-Ay\partial_x+D_-E_0\partial_x
 \end{pmatrix},
\]
with homogeneous Dirichlet conditions at \(y=\pm1\), and define the reaction matrix
\[
 \mathcal R_{\mathrm D}
 :=\kappa_{\mathrm D}
 \begin{pmatrix}
 -D_+&D_+\\
 D_-&-D_-
 \end{pmatrix}.
\]
Since
\(
\Delta\varphi=-(a_+-a_-)/\lambda_{\mathrm D}^2
\),
the ionic linearization is
\begin{equation}\label{eq:ED-unequal-system}
 \partial_t\bm a=\mathscr S_0\bm a+\mathcal R_{\mathrm D}\bm a.
\end{equation}

Tangential drifts preserve the scalar decay rate.  Indeed, if
\(
T_c(t)f(x,y):=f(x-ct,y)
\),
then \(T_c(t)\) is unitary on \(L^2(\Omega)\), preserves the Dirichlet condition in \(y\), and commutes with \(\Delta\) and \(y\partial_x\).  Hence
\[
 \e^{t(D_\pm\Delta-Ay\partial_x\mp D_\pm E_0\partial_x)}
 =
 T_{\pm D_\pm E_0}(t)\,
 \e^{t(D_\pm\Delta-Ay\partial_x)}.
\]
Consequently, the scalar \(L^2\)-decay constants are independent of \(E_0\).  These translations are used only to estimate the diagonal semigroups.  The coupled Duhamel equation below remains in the original common coordinates, so the reaction matrix stays constant.

Because all coefficients are independent of \(x\), the projection \(\mathscr P_{\neq 0}\) from Section~\ref{sec:main} commutes with both \(\mathscr S_0\) and \(\mathcal R_{\mathrm D}\).  The shear leaves the streamwise mean unchanged, while Lemma~\ref{lem:ionic} controls that mode through diffusion and the ionic reaction.

After removing the constant scalar drift, define
\[
 \tau:=|A|t,\qquad \varepsilon_\pm:=\frac{D_\pm}{|A|}.
\]
A diagonal equation then becomes
\[
 \partial_\tau f+\operatorname{sgn}(A)y\,\partial_xf
 -\varepsilon_\pm\Delta f=0.
\]
For \(A<0\), reflect \(x\mapsto-x\).  To match the unit torus and unit-height channel used in \cite{AlbrittonBeekieNovack2022}, define
\[
 X:=\frac{x}{2\pi},\qquad Y:=\frac{y+1}{2},\qquad
 g(X,Y,\tau):=f(2\pi X,2Y-1,\tau),
\]
and set
\[
 b(Y):=\frac{2Y-1}{2\pi},\qquad
 \eta_\pm:=\frac{\varepsilon_\pm}{4},\qquad
 \xi_\pm:=\frac{\varepsilon_\pm}{4\pi^2}.
\]
In these variables, the equation on \(\T_1\times(0,1)\) is
\[
 \partial_\tau g+b(Y)\partial_Xg
 -\eta_\pm\partial_Y^2g-\xi_\pm\partial_X^2g=0,
 \qquad g|_{Y=0,1}=0.
\]
We separate the horizontal heat flow from the vertically diffusive Couette evolution.  For a vertical diffusivity \(\eta>0\), let \(\mathcal V_\eta(\tau)\) denote the Dirichlet semigroup generated by
\[
 \partial_\tau h+b(Y)\partial_Xh-\eta\partial_Y^2h=0.
\]
Since \(b'(Y)=1/\pi\), the monotone case \(N=0\) of Theorem~1.1 in \cite{AlbrittonBeekieNovack2022} provides constants \(C_{\mathrm A}\ge1\), \(d_{\mathrm A}>0\), and \(\eta_0>0\) such that
\[
 \|\mathcal V_\eta(\tau)h_{\mathrm{in}}\|_{L^2}
 \le C_{\mathrm A}\e^{-d_{\mathrm A}\eta^{1/3}\tau}
       \|h_{\mathrm{in}}\|_{L^2},
 \qquad 0<\eta<\eta_0,
\]
for data with zero \(X\)-average.  The horizontal heat semigroup commutes with \(\mathcal V_\eta\) and is an \(L^2\)-contraction.  Fourier expansion in \(X\) therefore gives the exact factorization
\[
 g(\tau)=\e^{\xi_\pm\tau\partial_X^2}
          \mathcal V_{\eta_\pm}(\tau)g(0).
\]
Consequently, the preceding scalar estimate applies to \(g\) with the same constants.  The factorization first holds for smooth data and extends to \(L^2\) data by density.  Since
\[
 \|g(\tau)\|_{L^2(\T_1\times(0,1))}^2
 =\frac{1}{4\pi}\,
 \|f(t)\|_{L^2(\T_{2\pi}\times(-1,1))}^2,
 \qquad \tau=|A|t,
\]
and
\[
 \|g(0)\|_{L^2(\T_1\times(0,1))}^2
 =\frac{1}{4\pi}\,
 \|f(0)\|_{L^2(\T_{2\pi}\times(-1,1))}^2,
\]
the Jacobian factor cancels in the decay estimate.

We take the constants from Theorem~\ref{thm:ED-main} to be
\[
 \varepsilon_{\mathrm{ED}}=2\eta_0,\qquad
 C_{\mathrm{ED}}=C_{\mathrm A},\qquad
 d_{\mathrm{ED}}=4^{-1/3}d_{\mathrm A}.
\]
Then \(D_{\max}/|A|\le\varepsilon_{\mathrm{ED}}\) implies \(0<\eta_\pm<\eta_0\).  Returning to \(t\) gives
\[
 d_{\mathrm A}\eta_\pm^{1/3}\tau
 =d_{\mathrm{ED}}D_\pm^{1/3}|A|^{2/3}t.
\]
Define
\[
 \mathscr U_0(t):=\e^{t\mathscr S_0},
 \qquad
 \Lambda_{\mathrm{ED}}:=d_{\mathrm{ED}}D_{\min}^{1/3}|A|^{2/3}.
\]
The slower of the two diagonal rates then gives
\begin{equation}\label{eq:diagonal-ED}
 \norm{\mathscr U_0(t)\mathscr P_{\neq 0}}_{\mathcal L(L^2(\Omega)^2)}
 \le C_{\mathrm{ED}}\e^{-\Lambda_{\mathrm{ED}}t}.
\end{equation}

On \(L^2(\Omega)^2\), the reaction matrix satisfies
\begin{equation}\label{eq:reaction-bound}
 \norm{\mathcal R_{\mathrm D}}_{\mathcal L(L^2(\Omega)^2)}
 \le
 \kappa_{\mathrm D}
 \sqrt{2(D_+^2+D_-^2)}
 \le 2\kappa_{\mathrm D}D_{\max}.
\end{equation}
Applying \(\mathscr P_{\neq 0}\) to \eqref{eq:ED-unequal-system} and using variation of constants yields
\[
 \bm a_{\neq 0}(t)
 =\mathscr U_0(t)\bm a_{\neq 0}(0)
 +\int_0^t
 \mathscr U_0(t-s)\mathcal R_{\mathrm D}
 \bm a_{\neq 0}(s)\dd s.
\]
By \eqref{eq:diagonal-ED}--\eqref{eq:reaction-bound}, we have
\[
 \e^{\Lambda_{\mathrm{ED}}t}
 \norm{\bm a_{\neq 0}(t)}_{L^2(\Omega)^2}
 \le
 C_{\mathrm{ED}}\norm{\bm a_{\neq 0}(0)}_{L^2(\Omega)^2}
 +2C_{\mathrm{ED}}\kappa_{\mathrm D}D_{\max}
 \int_0^t
 \e^{\Lambda_{\mathrm{ED}}s}
 \norm{\bm a_{\neq 0}(s)}_{L^2(\Omega)^2}\dd s.
\]
Gronwall's inequality gives
\begin{equation}\label{eq:ED-before-absorption}
 \norm{\bm a_{\neq 0}(t)}_{L^2(\Omega)^2}
 \le C_{\mathrm{ED}}
 \exp\!\left[
 -\bigl(
 \Lambda_{\mathrm{ED}}
 -2C_{\mathrm{ED}}\kappa_{\mathrm D}D_{\max}
 \bigr)t
 \right]
 \norm{\bm a_{\neq 0}(0)}_{L^2(\Omega)^2}.
\end{equation}
The second assumption in \eqref{eq:ED-strong-shear} is exactly
\[
 2C_{\mathrm{ED}}\kappa_{\mathrm D}D_{\max}
 \le \frac12\Lambda_{\mathrm{ED}},
\]
so \eqref{eq:ED-before-absorption} reduces to \eqref{eq:main-ED}.

It remains to prove the equal-diffusivity assertion.  Suppose \(D_+=D_-=D\).  Write
\[
 \mathscr S_D:=D\Delta-Ay\partial_x,
 \qquad
 \mathcal K_E
 :=DE_0
 \begin{pmatrix}-1&0\\0&1\end{pmatrix}\partial_x
 +\kappa_{\mathrm D}D
 \begin{pmatrix}-1&1\\1&-1\end{pmatrix}.
\]
Then the ionic generator is
\[
 \mathscr S_D\Id+\mathcal K_E.
\]
For the \(k\)-th streamwise Fourier mode, \(\mathcal K_E\) is the constant matrix
\[
 M_k
 =\ii kDE_0
 \begin{pmatrix}-1&0\\0&1\end{pmatrix}
 +\kappa_{\mathrm D}D
 \begin{pmatrix}-1&1\\1&-1\end{pmatrix}.
\]
For \(\bm z=(z_+,z_-)^{\mathsf T}\in\C^2\),
\[
 \operatorname{Re}\langle M_k\bm z,\bm z\rangle_{\C^2}
 =-\kappa_{\mathrm D}D|z_+-z_-|^2\le0.
\]
Hence
\[
 \|\e^{tM_k}\|_{\mathcal L(\C^2)}\le1,
 \qquad t\ge0,
\]
and Parseval's identity gives
\[
 \|\e^{t\mathcal K_E}\|_{\mathcal L(L^2(\Omega)^2)}\le1.
\]
The operator \(\mathcal K_E\) contains no \(y\)-derivatives, so its semigroup preserves the homogeneous Dirichlet trace on \(y=\pm1\).  Its coefficients are independent of \(x\), and hence \(\e^{t\mathcal K_E}\) also commutes with \(\mathscr P_{\neq0}\).  On each streamwise Fourier mode, \(\mathscr S_D\Id\) is scalar in the species variables and therefore commutes with \(M_k\).  Thus
\[
 \e^{t(\mathscr S_D\Id+\mathcal K_E)}\mathscr P_{\neq0}
 =\e^{t\mathscr S_D}\mathscr P_{\neq0}\,\e^{t\mathcal K_E}.
\]
The scalar Couette estimate used above, applied to \(\mathscr S_D\), gives
\[
 \|\e^{t\mathscr S_D}\mathscr P_{\neq0}\|_{\mathcal L(L^2(\Omega))}
 \le C_{\mathrm{ED}}\e^{-d_{\mathrm{ED}}D^{1/3}|A|^{2/3}t}.
\]
Combining this bound with the contraction of \(\e^{t\mathcal K_E}\) proves \eqref{eq:main-ED-equal} without any restriction involving \(\kappa_{\mathrm D}\).
\end{proof}

\begin{remark}[Unequal-diffusivity threshold and factorization obstruction]\label{rem:ED-obstruction}
For \(D_+\ne D_-\), the two conditions in \eqref{eq:ED-strong-shear} are equivalent to the explicit sufficient lower bound
\[
 |A|\ge
 \max\left\{
 \frac{D_{\max}}{\varepsilon_{\mathrm{ED}}},
 \left(
 \frac{4C_{\mathrm{ED}}\kappa_{\mathrm D}D_{\max}}
      {d_{\mathrm{ED}}D_{\min}^{1/3}}
 \right)^{3/2}
 \right\}.
\]
The second term is produced by the operator-norm estimate on the reaction matrix and is not asserted to be sharp.

The exact equal-diffusivity factorization does not extend directly to \(D_+\ne D_-\).  Indeed, let
\[
 \mathsf D=
 \begin{pmatrix}D_+&0\\0&D_-\end{pmatrix},
 \qquad
 \mathsf R_{\mathrm D}
 =\kappa_{\mathrm D}
 \begin{pmatrix}-D_+&D_+\\D_-&-D_-\end{pmatrix}.
\]
The diffusive part acts through \(\mathsf D\Delta\), whereas the Poisson reaction acts through \(\mathsf R_{\mathrm D}\), and
\[
 [\mathsf D,\mathsf R_{\mathrm D}]
 =\kappa_{\mathrm D}(D_+-D_-)
 \begin{pmatrix}0&D_+\\-D_-&0\end{pmatrix}.
\]
Thus the two operators fail to commute unless \(D_+=D_-\).  The off-diagonal reaction therefore couples components evolving under different scalar diffusion operators, so the ionic generator cannot be decomposed into a common scalar Couette generator plus a commuting finite-dimensional factor.  This obstruction explains why the unequal-diffusivity proof uses Duhamel's formula.  It does not rule out a weaker sufficient condition obtained from a genuinely vector-valued hypocoercive estimate.
\end{remark}

\section{Weak Poisson--Boltzmann layers}\label{sec:PB-proof}

This section treats weak Poisson--Boltzmann layers for arbitrary fixed \(A\) and \(E_0\), with the wall-potential amplitude \(\delta_\zeta\) as the sole small parameter.  Introduce the perturbation variables
\[
 \bm u=\bm u_*+\bm v,
 \qquad
 c_\pm=c_{\pm,*}+b_\pm,
 \qquad
 \phi=\phi_*+\psi,
\]
and define
\[
 \bm Y:=(\bm v,b_+,b_-),
 \qquad
 q_\zeta:=b_+-b_-.
\]
For the perturbation potential,
\[
 -\lambda_{\mathrm D}^2\Delta\psi=q_\zeta,
 \qquad
 \psi|_{y=\pm1}=0.
\]
Linearization about the charged steady state gives
\begin{equation}\label{eq:PB-linearized}
\left\{
\begin{aligned}
 \partial_t\bm v+U_*\partial_x\bm v+v_2U_*'\bm e_1+\nabla\pi
 &=\nu\Delta\bm v
 +\gamma q_\zeta\bigl(E_0\bm e_1-\nabla\phi_*\bigr)
 -\gamma \varrho_*\nabla\psi,\\
 \Div\bm v&=0,\\
 \partial_t b_+ +U_*\partial_xb_+ +v_2c_{+,*}'
 &=D_+\Div\!\left(
 \nabla b_+ +b_+\nabla\phi_*+c_{+,*}\nabla\psi-E_0b_+\bm e_1
 \right),\\
 \partial_t b_- +U_*\partial_xb_- +v_2c_{-,*}'
 &=D_-\Div\!\left(
 \nabla b_- -b_-\nabla\phi_*-c_{-,*}\nabla\psi+E_0b_-\bm e_1
 \right),\\
 -\lambda_{\mathrm D}^2\Delta\psi&=q_\zeta,
 \qquad \psi|_{y=\pm1}=0.
\end{aligned}
\right.
\end{equation}
All perturbations have homogeneous Dirichlet traces.  Let \(\Lzeta\) be the projected operator obtained from \eqref{eq:PB-linearized} after eliminating \(\psi\).  Since the uniform problem is already realized in the coordinates \((\bm v,a_+,a_-)\), we have
\[
 \Lzeta\big|_{\zeta_+=\zeta_-=0}=\Lzero
\]
on the common domain \(\Dphase\).

To compare the charged generator with the electroneutral one, we first quantify how the steady Poisson--Boltzmann profile depends on the wall potentials.  The following estimate will be used repeatedly in the coefficient perturbation bounds.

\begin{lemma}[Small Poisson--Boltzmann profile]\label{lem:PB-small}
Fix an integer \(k\ge2\) and \(\lambda_{\mathrm D},c_0>0\).  There exist \(\delta_{\mathrm{prof}}>0\) and \(C_k>0\), with \(\delta_{\mathrm{prof}}\) allowed to depend on \(k\), such that, whenever \(\delta_\zeta\le\delta_{\mathrm{prof}}\),
\begin{equation}\label{eq:PB-small}
\begin{aligned}
 &\norm{\phi_*}_{W^{k,\infty}(-1,1)}
 +\norm{c_{+,*}-c_0}_{W^{k,\infty}(-1,1)}
 +\norm{c_{-,*}-c_0}_{W^{k,\infty}(-1,1)}
 \le C_k\delta_\zeta,\\
 &\norm{U_*-U_{\mathrm C}}_{W^{k,\infty}(-1,1)}
 \le C_k\frac{\gamma\lambda_{\mathrm D}^2}{\nu}
 |E_0|\delta_\zeta.
\end{aligned}
\end{equation}
\end{lemma}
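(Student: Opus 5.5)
Our plan is to obtain an \(L^\infty\) bound of order \(\delta_\zeta\) for \(\phi_*\) first, and then bootstrap to \(W^{k,\infty}\) using the ODE \eqref{eq:PB}. The profile estimate for \(U_*\) will then follow from the explicit formula \eqref{eq:base-state}.

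\textbf{Step 1 (maximum principle).} Write \eqref{eq:PB} as \(\lambda_{\mathrm D}^2\phi_*''=2c_0\sinh\phi_*\). Suppose \(\phi_*\) attains a positive interior maximum. There \(\phi_*''\le0\), while \(\sinh\phi_*>0\), which is a contradiction. The same argument rules out a negative interior minimum. Hence
\[
 \|\phi_*\|_{L^\infty}\le\max\{|\zeta_+|,|\zeta_-|\}\le\delta_\zeta.
\]
No smallness is needed for this step.

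\textbf{Step 2 (linear part in \(H_0^1\)).} Set \(w=\phi_*-\ell_\zeta\in H_0^1(-1,1)\). Since \(\ell_\zeta''=0\),
\[
 \lambda_{\mathrm D}^2w''=2c_0\sinh\phi_*.
\]
By Step 1, \(|\sinh\phi_*|\le\sinh(\delta_\zeta)\le C\delta_\zeta\) whenever \(\delta_\zeta\le1\). This gives \(\|w''\|_{L^\infty}\le C\lambda_{\mathrm D}^{-2}c_0\delta_\zeta\). Because \(w\) vanishes at both endpoints, \(w'\) vanishes somewhere in \((-1,1)\). Integrating therefore gives \(\|w\|_{W^{2,\infty}}\le C\delta_\zeta\). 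Since \(\|\ell_\zeta\|_{W^{k,\infty}}\le C\delta_\zeta\) trivially, we obtain \(\|\phi_*\|_{W^{2,\infty}}\le C\delta_\zeta\).

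\textbf{Step 3 (higher derivatives).} Differentiate the equation \(\lambda_{\mathrm D}^2\phi_*''=2c_0\sinh\phi_*\) repeatedly. For \(j\ge1\), \(\phi_*^{(j+2)}\) is a polynomial in \(\phi_*',\dots,\phi_*^{(j)}\) with coefficients \(\cosh\phi_*\) or \(\sinh\phi_*\). Every term contains at least one derivative factor, and all factors are \(O(\delta_\zeta)\) by induction. Since the coefficients are bounded by \(\cosh 1\), each term is \(O(\delta_\zeta)\) once \(\delta_\zeta\le\delta_{\mathrm{prof}}\le1\). This yields \(\|\phi_*\|_{W^{k,\infty}}\le C_k\delta_\zeta\). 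The main (mild) obstacle is bookkeeping in this Faà di Bruno induction. We only need that each term is at least linear in quantities already bounded by \(C\delta_\zeta\), together with \(\delta_\zeta\le1\).

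\textbf{Step 4 (concentrations and velocity).} We have \(c_{\pm,*}-c_0=c_0(\e^{\mp\phi_*}-1)\). The function \(s\mapsto \e^{\mp s}-1\) is smooth and vanishes at \(0\), and \(\|\phi_*\|_{W^{k,\infty}}\le C_k\delta_\zeta\le C_k\). The chain rule then bounds every derivative by \(C_k\delta_\zeta\). For the velocity, \eqref{eq:base-state} gives
\[
 U_*-U_{\mathrm C}=\chi E_0(\phi_*-\ell_\zeta),
 \qquad \chi=\frac{\gamma\lambda_{\mathrm D}^2}{\nu}.
\]
Combining Step 3 with \(\|\ell_\zeta\|_{W^{k,\infty}}\le\delta_\zeta\) yields the second line of \eqref{eq:PB-small}.
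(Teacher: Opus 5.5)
Your proof is correct, and it takes a genuinely different route from the paper.

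\textbf{The paper's route.} The paper writes \(\phi_*=\ell_\zeta+w\) and applies the implicit function theorem to the map \(w\mapsto-\lambda_{\mathrm D}^2w''-c_0\e^{-(\ell_\zeta+w)}+c_0\e^{\ell_\zeta+w}\). This map goes from \(H^{m+2}\cap H_0^1\), together with the parameters \(\zeta_\pm\), into \(H^m\). The key input is that the Dirichlet linearization \(-\lambda_{\mathrm D}^2h''+2c_0h\) is an isomorphism. The theorem yields a local branch with \(\|w\|_{H^{m+2}}\le C_m\delta_\zeta\). That branch must then be identified with \(\phi_*\) through the uniqueness part of Proposition~\ref{prop:exact}. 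Composition estimates and the one-dimensional Sobolev embedding, with \(m=k\), then give the \(W^{k,\infty}\) bounds.

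\textbf{Your route.} You use the monotonicity of \(\sinh\) through the maximum principle. This gives \(\|\phi_*\|_{L^\infty}\le\max\{|\zeta_+|,|\zeta_-|\}\) directly for the smooth solution already constructed. You then bootstrap through the ODE \(\lambda_{\mathrm D}^2\phi_*''=2c_0\sinh\phi_*\) and the chain rule.

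\textbf{What each approach buys.} Your argument is more elementary and slightly stronger. It needs no branch identification. Since it uses only \(\delta_\zeta\le1\) (or any fixed upper bound), \(\delta_{\mathrm{prof}}\) can be taken independent of \(k\). Smallness enters only through the linear-in-\(\delta_\zeta\) form of the bound, not as a genuine restriction. The paper's implicit-function approach additionally gives differentiable (indeed smooth) dependence of the profile on \((\zeta_+,\zeta_-)\), but the lemma as stated does not need this.
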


\begin{proof}
Fix a nonnegative integer \(m\), set
\[
 X_m:=H^{m+2}(-1,1)\cap H_0^1(-1,1),
 \qquad
 Y_m:=H^m(-1,1),
\]
and write \(\phi_*=\ell_\zeta+w\).  Define
\[
 \mathcal F_m(w,\zeta_+,\zeta_-)
 :=-\lambda_{\mathrm D}^2w''
 -c_0\e^{-(\ell_\zeta+w)}
 +c_0\e^{\ell_\zeta+w}.
\]
Near the origin, \(\mathcal F_m:X_m\times\R^2\longrightarrow Y_m\) is \(C^1\), and
\[
 D_w\mathcal F_m(0,0,0)h
 =-\lambda_{\mathrm D}^2h''+2c_0h.
\]
For \(f\in Y_m\), the Dirichlet problem
\[
 -\lambda_{\mathrm D}^2h''+2c_0h=f,
 \qquad h(\pm1)=0,
\]
has a unique weak solution by the Lax--Milgram theorem.  One-dimensional elliptic
regularity upgrades it to \(h\in X_m\) and gives
\[
 \|h\|_{H^{m+2}}\le C_m\|f\|_{H^m}.
\]
It follows that \(D_w\mathcal F_m(0,0,0):X_m\longrightarrow Y_m\) is an isomorphism.
Hence the implicit function theorem gives \(\delta_m>0\) and a unique local branch
\(w=w(\zeta_+,\zeta_-)\in X_m\) satisfying
\[
 \|w\|_{H^{m+2}}\le C_m\delta_\zeta
 \qquad (\delta_\zeta\le\delta_m).
\]
Since \(\|\ell_\zeta\|_{H^{m+2}}\le C_m\delta_\zeta\), the corresponding branch
\(\phi=\ell_\zeta+w\) obeys the same bound in \(H^{m+2}\).  By the uniqueness established in Proposition~\ref{prop:exact}, this local branch is precisely the Poisson--Boltzmann solution \(\phi_*\) defined there.  Standard composition estimates for the smooth maps
\(z\mapsto c_0\e^{\mp z}\) then give
\[
 \|\phi_*\|_{H^{m+2}}
 +\|c_{+,*}-c_0\|_{H^{m+2}}
 +\|c_{-,*}-c_0\|_{H^{m+2}}
 \le C_m\delta_\zeta.
\]
Taking \(m=k\) and using the one-dimensional Sobolev embedding gives the first line of \eqref{eq:PB-small}.  The second follows from the identity for \(U_*\) in \eqref{eq:base-state}.
\end{proof}

\begin{lemma}[Graph-domain estimate]\label{lem:operator-difference}
For fixed physical parameters, there exist \(C_{\mathrm{PB}}>0\) and \(\delta_{\mathrm{op}}>0\) such that
\begin{equation}\label{eq:operator-difference}
 \norm{(\Lzeta-\Lzero)\bm Y}_{\Hphase}
 \le C_{\mathrm{PB}}\delta_\zeta\norm{\bm Y}_{\Dphase},
 \qquad
 \bm Y\in\Dphase,
 \qquad
 0\le\delta_\zeta\le\delta_{\mathrm{op}}.
\end{equation}
\end{lemma}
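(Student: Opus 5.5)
Since \(\Lzeta\) and \(\Lzero\) share the domain \(\Dphase\) and the same principal parts \(-\nu\AS\), \(-D_\pm\AD\) and \(Ay\partial_x\), the plan is to write the difference \(\Lzeta-\Lzero\) explicitly as a sum of lower-order terms whose coefficients are built from \(\phi_*\), \(c_{\pm,*}-c_0\), \(\varrho_*\), and \(U_*-U_{\mathrm C}\) together with their derivatives. Each term is then bounded on \(\Hphase\) by a coefficient norm times \(\|\bm Y\|_{\Dphase}\). Lemma~\ref{lem:PB-small} with \(k=2\) controls every coefficient in \(W^{2,\infty}(-1,1)\) by \(C\delta_\zeta\). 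We take \(\delta_{\mathrm{op}}:=\delta_{\mathrm{prof}}\) for that \(k\), and recall that after the Galilean shift \(U_{\mathrm C}\) becomes \(Ay\). Throughout we use \(\|\Pdiv\|_{\mathcal L(L^2)}\le1\) and the elliptic bound \(\|\Kop q\|_{H^2}\le C\lambda_{\mathrm D}^{-2}\|q\|_{L^2}\).

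\emph{Fluid component.} Comparing \eqref{eq:PB-linearized} with \eqref{eq:linearized-uniform}, the difference is \(\Pdiv\) applied to
\[
 -(U_*-Ay)\partial_x\bm v-v_2(U_*'-A)\bm e_1-\gamma q_\zeta\nabla\phi_*-\gamma\varrho_*\nabla\Kop q_\zeta .
\]
The \(E_0q\bm e_1\) term is common to both operators and cancels. Each piece is bounded by \(\|\text{coefficient}\|_{W^{1,\infty}}\) times \(\|\bm Y\|_{H^1}\). Since \(\varrho_*=c_{+,*}-c_{-,*}\), we have \(\|\varrho_*\|_{L^\infty}\le C\delta_\zeta\), so this component is at most \(C\delta_\zeta\|\bm Y\|_{\Dphase}\).

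\emph{Ionic components.} Expand the divergences. For the \(+\) species the difference from the uniform operator is
\[
 -(U_*-Ay)\partial_xb_+-v_2c_{+,*}'+D_+\Div(b_+\nabla\phi_*)+D_+\Div\bigl((c_{+,*}-c_0)\nabla\Kop q_\zeta\bigr).
\]
The uniform part \(D_+c_0\Delta\Kop q_\zeta\) is exactly what \(\Lzero\) contains, and the drift \(-D_+E_0\partial_xb_+\) is identical in both operators. Expanding gives terms such as \(\nabla b_+\cdot\nabla\phi_*\), \(b_+\phi_*''\), \(c_{+,*}'\partial_y\Kop q_\zeta\) and \((c_{+,*}-c_0)\Delta\Kop q_\zeta\). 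Each coefficient is \(O(\delta_\zeta)\) in \(W^{2,\infty}\), and each factor is controlled by \(\|b_\pm\|_{H^1}\), \(\|\bm v\|_{L^2}\) or \(\|\Kop q_\zeta\|_{H^2}\). The \(-\) species is handled the same way with the signs flipped. One caveat is that \(v_2c_{\pm,*}'\) is a new velocity-to-ion coupling, which breaks the triangular structure. For this lemma that does not matter, since the term is bounded by \(C\delta_\zeta\|\bm v\|_{L^2}\).

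\emph{Anticipated difficulty.} The main point needing care is bookkeeping rather than analysis. First, \(\Lzeta\) must be written in exactly the same projected form and variables as \(\Lzero\), so that \(\Lzeta|_{\zeta=0}=\Lzero\) holds term by term and no \(O(1)\) discrepancy survives, in particular in the drift terms and the \(c_0\Delta\psi\) terms. Second, the bound on \(U_*-Ay\) carries the factor \(\chi|E_0|\) from Lemma~\ref{lem:PB-small}. Since \(E_0\) is fixed, this factor is absorbed into \(C_{\mathrm{PB}}\). Summing the three components gives \eqref{eq:operator-difference}.
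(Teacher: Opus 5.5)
Your proposal is correct and follows essentially the same route as the paper's proof. You write $\Lzeta-\Lzero$ componentwise as the same lower-order coefficient differences: in the fluid, transport by $U_*-U_{\mathrm C}$, $v_2(U_*'-A)\bm e_1$, $\gamma q_\zeta\nabla\phi_*$ and $\gamma\varrho_*\nabla\psi$; in the ionic equations, $v_2c_{\pm,*}'$, $\Div(b_\pm\nabla\phi_*)$ and $\Div((c_{\pm,*}-c_0)\nabla\psi)$. You then bound each term using Lemma~\ref{lem:PB-small} with $k=2$ and the Dirichlet $H^2$ estimate for $\psi=\Kop q_\zeta$, exactly as the paper does. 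Your explicit choice $\delta_{\mathrm{op}}=\delta_{\mathrm{prof}}$ makes precise a threshold the paper leaves implicit.
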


\begin{proof}
Fix \(\bm Y\in\Dphase\).  Recall that \(q_\zeta=b_+-b_-\) and \(\psi=\Kop q_\zeta\).  Lemma~\ref{lem:PB-small} with \(k=2\) gives
\begin{equation}\label{eq:coeff-small}
 \|U_*-U_{\mathrm C}\|_{W^{2,\infty}}
 +\|c_{+,*}-c_0\|_{W^{2,\infty}}
 +\|c_{-,*}-c_0\|_{W^{2,\infty}}
 +\|\phi_*\|_{W^{2,\infty}}
 +\|\varrho_*\|_{W^{2,\infty}}
 \le C\delta_\zeta.
\end{equation}
Dirichlet elliptic regularity applied to
\(
-\lambda_{\mathrm D}^2\Delta\psi=q_\zeta
\)
gives
\begin{equation}\label{eq:psi-H2}
 \|\psi\|_{H^2}
 \le C\lambda_{\mathrm D}^{-2}\|q_\zeta\|_{L^2}.
\end{equation}

After applying the Leray projector, the difference of the momentum components of
\(\Lzeta\bm Y\) and \(\Lzero\bm Y\) is the projection of
\[
 -(U_*-U_{\mathrm C})\partial_x\bm v
 -(U_*'-A)v_2\bm e_1
 -\gamma q_\zeta\nabla\phi_*
 -\gamma\varrho_*\nabla\psi.
\]
Since the Leray projector is bounded on \(L^2\),
\begin{align*}
 \|(U_*-U_{\mathrm C})\partial_x\bm v\|_{L^2}
 &\le C\delta_\zeta\|\bm v\|_{H^1},\\
 \|(U_*'-A)v_2\|_{L^2}
 &\le C\delta_\zeta\|\bm v\|_{L^2},\\
 \|q_\zeta\nabla\phi_*\|_{L^2}
 &\le C\delta_\zeta\|q_\zeta\|_{L^2},\\
 \|\varrho_*\nabla\psi\|_{L^2}
 &\le C\delta_\zeta\|\psi\|_{H^2}
 \le C\delta_\zeta\lambda_{\mathrm D}^{-2}\|q_\zeta\|_{L^2}.
\end{align*}
Hence the momentum component is bounded by
\(C\delta_\zeta\|\bm Y\|_{\Dphase}\).

For the positive species, subtracting the uniform coefficients gives
\[
 -(U_*-U_{\mathrm C})\partial_xb_+
 -v_2c_{+,*}'
 +D_+\Div(b_+\nabla\phi_*)
 +D_+\Div((c_{+,*}-c_0)\nabla\psi).
\]
The first two terms satisfy
\[
 \|(U_*-U_{\mathrm C})\partial_xb_+\|_{L^2}
 +\|v_2c_{+,*}'\|_{L^2}
 \le C\delta_\zeta
 \bigl(\|b_+\|_{H^1}+\|\bm v\|_{L^2}\bigr).
\]
Moreover,
\[
 \Div(b_+\nabla\phi_*)
 =\nabla b_+\cdot\nabla\phi_*+b_+\Delta\phi_*,
\]
which implies
\[
 \|\Div(b_+\nabla\phi_*)\|_{L^2}
 \le C\delta_\zeta\|b_+\|_{H^1}.
\]
Finally,
\[
 \Div((c_{+,*}-c_0)\nabla\psi)
 =\nabla(c_{+,*}-c_0)\cdot\nabla\psi
 +(c_{+,*}-c_0)\Delta\psi.
\]
Combining \eqref{eq:coeff-small}, \eqref{eq:psi-H2}, and
\(\Delta\psi=-q_\zeta/\lambda_{\mathrm D}^2\),  we obtain
\[
 \|\Div((c_{+,*}-c_0)\nabla\psi)\|_{L^2}
 \le C\delta_\zeta\lambda_{\mathrm D}^{-2}\|q_\zeta\|_{L^2}.
\]
For the negative species the coefficient difference is
\[
 -(U_*-U_{\mathrm C})\partial_x b_-
 -v_2c_{-,*}'
 -D_-\Div(b_-\nabla\phi_*)
 -D_-\Div((c_{-,*}-c_0)\nabla\psi).
\]
The first two terms are bounded by
\[
 C\delta_\zeta(\|b_-\|_{H^1}+\|\bm v\|_{L^2}),
\]
and the two divergence terms satisfy
\[
 \|\Div(b_-\nabla\phi_*)\|_{L^2}
 +\|\Div((c_{-,*}-c_0)\nabla\psi)\|_{L^2}
 \le C\delta_\zeta
 (\|b_-\|_{H^1}+\lambda_{\mathrm D}^{-2}\|q_\zeta\|_{L^2}).
\]
All physical parameters are fixed, so the factors involving \(D_\pm\), \(\gamma\), and \(\lambda_{\mathrm D}^{-1}\) are absorbed into \(C\).  Summing the fluid and the two ionic estimates proves \eqref{eq:operator-difference}.
\end{proof}

\begin{lemma}[Quadratic remainder at a weak layer]\label{lem:PB-nonlinear-map}
For \(\theta\in(3/4,1)\), the nonlinear remainder \(\Nzeta\) belongs to
\(C^1(\Xtheta,\Hphase)\) and satisfies
\[
 \|\Nzeta(\bm Y)\|_{\Hphase}\le C\|\bm Y\|_{\Xtheta}^2,
\]
\[
 \|\Nzeta(\bm Y)-\Nzeta(\widetilde{\bm Y})\|_{\Hphase}
 \le C\bigl(\|\bm Y\|_{\Xtheta}+\|\widetilde{\bm Y}\|_{\Xtheta}\bigr)
          \|\bm Y-\widetilde{\bm Y}\|_{\Xtheta}.
\]
For fixed physical parameters and \(\theta\), the constant \(C\) is independent of \(\zeta_\pm\).
\end{lemma}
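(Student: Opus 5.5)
First identify \(\Nzeta\) explicitly.  Substituting \(\bm u=\bm u_*+\bm v\), \(c_\pm=c_{\pm,*}+b_\pm\), \(\phi=\phi_*+\psi\) into \eqref{eq:PNPNS} and subtracting both the steady equations of Proposition~\ref{prop:exact} and the linear part \eqref{eq:PB-linearized}, the remaining terms are exactly those of second order in the perturbation.  Since the base state enters only through linear terms, the remainder has the same structure as \(\Nzero\):
\[
 \Nzeta(\bm Y)=
 \begin{pmatrix}
 -\Pdiv(\bm v\cdot\nabla\bm v)-\gamma\Pdiv(q_\zeta\nabla\Kop q_\zeta)\\[1mm]
 -\bm v\cdot\nabla b_+ +D_+\Div(b_+\nabla\Kop q_\zeta)\\[1mm]
 -\bm v\cdot\nabla b_- -D_-\Div(b_-\nabla\Kop q_\zeta)
 \end{pmatrix}.
\]
Indeed, the momentum nonlinearity \(\gamma\varrho(E_0\bm e_1-\nabla\phi)\) produces only the product \(-\gamma q_\zeta\nabla\psi\) beyond linear terms.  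The Nernst--Planck fluxes \(c_\pm\nabla\phi\) produce only \(b_\pm\nabla\psi\), and the advection produces only \(\bm v\cdot\nabla\bm v\) and \(\bm v\cdot\nabla b_\pm\).  Every base-state coefficient (\(U_*\), \(c_{\pm,*}\), \(\phi_*\), \(\varrho_*\)) multiplies a single perturbation factor and has therefore already been absorbed into \(\Lzeta\).  I will verify this bookkeeping term by term, being careful about the sign conventions \(\pm c_\pm\nabla\phi\) and about the convention that \(\psi=\Kop q_\zeta\) with homogeneous Dirichlet trace.  The last point holds because \(\phi_*\) carries the boundary data \(\zeta_\pm\).

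With this identification, \(\Nzeta\) coincides with \(\Nzero\) as a map \(\Xtheta\to\Hphase\), after renaming \((a_+,a_-)\) to \((b_+,b_-)\).  So \(\Nzeta(\bm Y)=\mathcal Q(\bm Y,\bm Y)\) with the same continuous bilinear form \(\mathcal Q\) constructed in the proof of Lemma~\ref{lem:quadratic}.  The quadratic bound, the difference estimate obtained from
\(\mathcal Q(\bm Y,\bm Y)-\mathcal Q(\widetilde{\bm Y},\widetilde{\bm Y})=\mathcal Q(\bm Y-\widetilde{\bm Y},\bm Y)+\mathcal Q(\widetilde{\bm Y},\bm Y-\widetilde{\bm Y})\),
and the \(C^1\) property with \(D\Nzeta(\bm Y)\bm W=\mathcal Q(\bm W,\bm Y)+\mathcal Q(\bm Y,\bm W)\) all follow verbatim from Lemma~\ref{lem:quadratic}.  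The product estimates there use only the embedding \eqref{eq:Xembed} with \(2\theta>3/2\), Dirichlet elliptic regularity for \(\Kop\), and the constants \(\gamma,D_\pm,\lambda_{\mathrm D}\).

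This gives independence of \(C\) from \(\zeta_\pm\) directly: no base-state quantity appears in \(\mathcal Q\), so the constant is the one from Lemma~\ref{lem:quadratic}.

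The main obstacle is the bookkeeping step, namely confirming that no cross term involving a base-state quantity times two perturbation factors survives.  One might worry that the Boltzmann factors produce such a term, but they do not, because the equations are polynomial (bilinear) in \((\bm u,c_\pm,\phi)\).  If any such term did appear, with coefficients bounded in \(W^{1,\infty}\), I would bound it using Lemma~\ref{lem:PB-small}.  That lemma gives coefficient bounds uniform for \(\delta_\zeta\le\delta_{\mathrm{prof}}\), which would again keep \(C\) independent of \(\zeta_\pm\) within that range.
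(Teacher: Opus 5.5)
Your proposal is correct and follows the paper's proof: the paper also writes out the three components of $\Nzeta$, observes that under the renaming $(a_+,a_-,q,\varphi)=(b_+,b_-,q_\zeta,\psi)$ it is literally the map $\Nzero$, and reads off the quadratic bound, the difference bound, the $C^1$ property and the $\zeta$-independence of $C$ from Lemma~\ref{lem:quadratic}. Your fallback via Lemma~\ref{lem:PB-small} is never needed, because the system is bilinear in $(\bm u,c_\pm,\phi)$ and so no base-state coefficient survives in the remainder.
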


\begin{proof}
Recall \(\bm Y=(\bm v,b_+,b_-)\), \(q_\zeta=b_+-b_-\), and
\(\psi=\Kop q_\zeta\).  After subtracting the exact steady equations and all
terms retained in the linearization \eqref{eq:PB-linearized}, the three
components of the nonlinear remainder are
\[
\begin{aligned}
 (\Nzeta(\bm Y))_{\mathrm{fl}}
 &=-\Pdiv(\bm v\cdot\nabla\bm v)
   -\gamma\Pdiv(q_\zeta\nabla\psi),\\
 (\Nzeta(\bm Y))_+
 &=-\bm v\cdot\nabla b_+
   +D_+\Div(b_+\nabla\psi),\\
 (\Nzeta(\bm Y))_-
 &=-\bm v\cdot\nabla b_-
   -D_-\Div(b_-\nabla\psi).
\end{aligned}
\]
Since \(\psi=\Kop q_\zeta\), this can be written as
\[
 \Nzeta(\bm Y)=
 \begin{pmatrix}
 -\Pdiv(\bm v\cdot\nabla\bm v)-\gamma\Pdiv(q_\zeta\nabla\Kop q_\zeta)\\[1mm]
 -\bm v\cdot\nabla b_+ +D_+\Div(b_+\nabla\Kop q_\zeta)\\[1mm]
 -\bm v\cdot\nabla b_- -D_-\Div(b_-\nabla\Kop q_\zeta)
 \end{pmatrix}.
\]
Under the identification
\[
 (a_+,a_-,q,\varphi)=(b_+,b_-,q_\zeta,\psi),
\]
the right-hand side is exactly the map \(\Nzero\) in Section~\ref{sec:nonlinear-proof}.  Hence
\[
 \Nzeta(\bm Y)=\Nzero(\bm Y).
\]
Lemma~\ref{lem:quadratic} therefore gives
\[
 \|\Nzeta(\bm Y)\|_{\Hphase}
 \le C\|\bm Y\|_{\Xtheta}^2
\]
and
\[
 \|\Nzeta(\bm Y)-\Nzeta(\widetilde{\bm Y})\|_{\Hphase}
 \le C\bigl(\|\bm Y\|_{\Xtheta}+\|\widetilde{\bm Y}\|_{\Xtheta}\bigr)
          \|\bm Y-\widetilde{\bm Y}\|_{\Xtheta}.
\]
Since \(\Nzeta\) is the same quadratic map in these coordinates, the constant
\(C\) has no additional dependence on \(\zeta_\pm\).
\end{proof}

Lemmas~\ref{lem:operator-difference} and~\ref{lem:PB-nonlinear-map} now provide the two inputs needed for Theorem~\ref{thm:PB-main}.

\begin{proof}[Proof of Theorem~\ref{thm:PB-main}]
By Theorem~\ref{thm:linear-main}, \(\Lzero\) generates an exponentially stable analytic semigroup on \(\Hphase\) with domain \(\Dphase\).  Lemma~\ref{lem:operator-difference} gives
\[
 \Lzeta=\Lzero+\mathcal B_\zeta,
 \qquad
 \|\mathcal B_\zeta\|_{\mathcal L(\Dphase,\Hphase)}\le C_{\mathrm{PB}}\delta_\zeta.
\]
Recall \(C_{\mathrm g}\ge1\) from Lemma~\ref{lem:graph-norm} and put \(c_{\mathrm g}=C_{\mathrm g}^{-1}\).  Then
\begin{equation}\label{eq:PB-graph-perturbation}
 \|\mathcal B_\zeta\bm Y\|_{\Hphase}
 \le \frac{C_{\mathrm{PB}}\delta_\zeta}{c_{\mathrm g}}
 \bigl(\|\bm Y\|_{\Hphase}+\|\Lzero\bm Y\|_{\Hphase}\bigr),
 \qquad \bm Y\in\Dphase.
\end{equation}
Let \(\omega_{\mathrm{lin}}=\omega_0/2\), and denote by
\(\varepsilon_{\mathrm{pert}}(\omega_{\mathrm{lin}})\) the threshold in Lemma~\ref{lem:stable-perturbation}.  If \(\delta_{\mathrm{prof}}\) and \(\delta_{\mathrm{op}}\) are the thresholds in Lemmas~\ref{lem:PB-small} and~\ref{lem:operator-difference}, choose \(\delta_*>0\) so that
\[
 \delta_*<\min\left\{
 \delta_{\mathrm{prof}},\delta_{\mathrm{op}},
 \frac{c_{\mathrm g}\varepsilon_{\mathrm{pert}}(\omega_{\mathrm{lin}})}{C_{\mathrm{PB}}},
 \frac{c_{\mathrm g}}{2C_{\mathrm{PB}}}
 \right\}.
\]
Lemma~\ref{lem:stable-perturbation} applied to \eqref{eq:PB-graph-perturbation} proves \eqref{eq:PB-main-linear}.  The same choice of \(\delta_*\) gives
\[
 \frac{c_{\mathrm g}}2\|\bm Y\|_{\Dphase}
 \le \|\bm Y\|_{\Hphase}+\|\Lzeta\bm Y\|_{\Hphase}
 \le C\|\bm Y\|_{\Dphase},
 \qquad \bm Y\in\Dphase,
\]
so
\[
 [\Hphase,\mathcal D(\Lzeta)]_\theta=\Xtheta
\]
with equivalent norms.

Let \(\omega_\zeta>0\) be the decay rate in \eqref{eq:PB-main-linear}.  Choose
\(0<\omega_{\zeta,\mathrm{sg}}<\omega_\zeta\) and \(C_{\zeta,\mathrm{sg}}\ge1\) so that
\[
\begin{aligned}
 \|\e^{t\Lzeta}\|_{\mathcal L(\Hphase,\Xtheta)}
 &\le C_{\zeta,\mathrm{sg}}t^{-\theta}\e^{-\omega_{\zeta,\mathrm{sg}}t},&&t>0,\\
 \|\e^{t\Lzeta}\|_{\mathcal L(\Xtheta)}
 &\le C_{\zeta,\mathrm{sg}}\e^{-\omega_{\zeta,\mathrm{sg}}t},&&t\ge0.
\end{aligned}
\]
Since
\[
 \partial_t\bm Y=\Lzeta\bm Y+\Nzeta(\bm Y),
\]
the fixed-point proof of Theorem~\ref{thm:nonlinear-main} applies with
\[
 \Lzero\mapsto\Lzeta,
 \qquad \Nzero\mapsto\Nzeta,
 \qquad \omega_1\mapsto\omega_{\zeta,\mathrm{sg}}.
\]
To make the smallness constants explicit, let \(C_{\mathrm Q}>0\) be the bilinear constant in Lemma~\ref{lem:PB-nonlinear-map}, put
\[
 \omega_{\zeta,\mathrm{nl}}=\frac12\omega_{\zeta,\mathrm{sg}},
 \qquad
 K_{\zeta,\theta}=C_{\zeta,\mathrm{sg}}
 \int_0^\infty r^{-\theta}
 \e^{-(\omega_{\zeta,\mathrm{sg}}-\omega_{\zeta,\mathrm{nl}})r}\dd r,
\]
and define
\[
 C_{\zeta,\mathrm{quad}}=K_{\zeta,\theta}C_{\mathrm Q},
 \qquad
 C_{\zeta,\mathrm{Lip}}=2K_{\zeta,\theta}C_{\mathrm Q},
 \qquad
 C_{\zeta,\mathrm{nl}}=2C_{\zeta,\mathrm{sg}}.
\]
These are the constants produced by the weighted Duhamel convolution and the quadratic difference estimate.  It is therefore enough to choose
\begin{equation}\label{eq:PB-initial-threshold}
 \varepsilon_\zeta<\min\left\{
 \frac{1}{4C_{\zeta,\mathrm{sg}}C_{\zeta,\mathrm{quad}}},
 \frac{1}{2C_{\zeta,\mathrm{sg}}C_{\zeta,\mathrm{Lip}}},
 \frac{m_*}{2C_{\mathrm{emb}}C_{\zeta,\mathrm{nl}}}
 \right\}.
\end{equation}
The contraction estimate then yields \eqref{eq:PB-main-nonlinear}.  The Volterra uniqueness estimate and the positive-time regularity argument from Theorem~\ref{thm:nonlinear-main}, with the same replacements above, give
\[
 \bm Y\in C((0,\infty);\Dphase)\cap C^1((0,\infty);\Hphase)
\]
and uniqueness in \(C([0,T];\Xtheta)\).  Since both \(\e^{t\Lzeta}\) and \(\Nzeta\) preserve real-valued states, the solution is real for real initial data.

The final term in \eqref{eq:PB-initial-threshold}, together with \eqref{eq:Xembed}, gives
\[
 c_{\pm,*}(y)+b_\pm(x,y,t)\ge\frac{m_*}{2},
 \qquad t\ge0.
\]
For \(t>0\), the pressure is recovered from
\[
\begin{aligned}
 \nabla\pi
 =(\Id-\Pdiv)\Bigl[
 &\nu\Delta\bm v-U_*\partial_x\bm v-v_2U_*'\bm e_1
 +\gamma q_\zeta(E_0\bm e_1-\nabla\phi_*)\\
 &-\gamma\varrho_*\nabla\psi
 -\bm v\cdot\nabla\bm v-\gamma q_\zeta\nabla\psi
 \Bigr].
\end{aligned}
\]
We choose the unique representative satisfying
\[
 \int_\Omega\pi(t)\dd x\dd y=0.
\]
Hence the projected solution satisfies the full perturbation system with homogeneous perturbation traces.
\end{proof}

\begin{remark}[A quantitative sufficient wall-potential condition]
The proof yields the explicit sufficient restriction
\[
 \delta_\zeta<\min\left\{
 \delta_{\mathrm{prof}},\delta_{\mathrm{op}},
 \frac{c_{\mathrm g}\varepsilon_{\mathrm{pert}}(\omega_0/2)}{C_{\mathrm{PB}}},
 \frac{c_{\mathrm g}}{2C_{\mathrm{PB}}}
 \right\}.
\]
Here \(c_{\mathrm g}=C_{\mathrm g}^{-1}\) converts the \(\Dphase\)-norm in Lemma~\ref{lem:operator-difference} to the graph norm used in Lemma~\ref{lem:stable-perturbation}.
\end{remark}

\begin{remark}[Blocking ionic walls]
Theorem~\ref{thm:PB-main} uses reservoir Dirichlet data, so the perturbation domain stays fixed as the wall potential varies.  For blocking walls, the no-flux boundary operator and the conserved ionic masses must instead be incorporated into the phase space; see \cite{ConstantinIgnatova2019,ConstantinIgnatovaLeeNear2022}.
\end{remark}

\appendix
\section{A perturbation lemma for analytic generators}\label{app:perturbation}

Section~\ref{sec:PB-proof} uses the following standard perturbation principle for analytic generators.  The analytic-generation and relative-perturbation ingredients are standard; see \cite[Chapter~2, Section~2.5 and Chapter~3, Section~3.2]{Pazy1983}.  For the final decay step we use the spectral mapping theorem for analytic semigroups, and hence the equality of the spectral and growth bounds; see \cite[Chapter~IV, Corollary~3.12]{EngelNagel2000}.  We include a short proof to record the graph-norm threshold used in Theorem~\ref{thm:PB-main}.

\begin{lemma}[Stable sectorial perturbations]\label{lem:stable-perturbation}
Let \(\mathscr X\) be a Banach space and let
\[
 \mathcal A:\mathcal D(\mathcal A)\subset\mathscr X\longrightarrow\mathscr X
\]
generate an analytic semigroup satisfying
\[
 \|\e^{t\mathcal A}\|_{\mathcal L(\mathscr X)}
 \le M\e^{-\omega_{\mathcal A}t},
 \qquad t\ge0,
\]
for some \(M\ge1\) and \(\omega_{\mathcal A}>0\).  Equip the domain with
\[
 \|x\|_{\mathcal D(\mathcal A)}
 :=\|x\|_{\mathscr X}+\|\mathcal Ax\|_{\mathscr X}.
\]
For \(0<\omega<\omega_{\mathcal A}\) there exists
\(\varepsilon_{\mathrm{pert}}(\omega)>0\) such that, if
\[
 \mathcal B\in\mathcal L(\mathcal D(\mathcal A),\mathscr X),
 \qquad
 \norm{\mathcal B}_{\mathcal L(\mathcal D(\mathcal A),\mathscr X)}
 <\varepsilon_{\mathrm{pert}}(\omega),
\]
then \(\mathcal A+\mathcal B\), with domain \(\mathcal D(\mathcal A)\),
generates an analytic semigroup whose decay rate can be chosen arbitrarily below
\(\omega\).
\end{lemma}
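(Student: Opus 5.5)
The plan is a resolvent argument in a shifted sector, followed by the spectral-mapping step cited before the lemma. Since $\mathcal A$ generates an analytic semigroup with $\|\e^{t\mathcal A}\|\le M\e^{-\omega_{\mathcal A}t}$, the shifted operator $\mathcal A+\omega_{\mathcal A}'$ is sectorial for any $\omega'$ between $\omega$ and $\omega_{\mathcal A}$, say $\omega':=(\omega+\omega_{\mathcal A})/2$. Concretely, there is an angle $\vartheta\in(\pi/2,\pi)$ and a constant $K$ such that the set
\[
\Sigma:=\{\lambda\in\C:\ \lambda\neq-\omega',\ |\arg(\lambda+\omega')|<\vartheta\}
\]
lies in $\rho(\mathcal A)$, with $\|(\lambda-\mathcal A)^{-1}\|\le K/|\lambda+\omega'|$ on $\Sigma$. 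This is obtained by combining the Laplace-transform bound on $\{\operatorname{Re}\lambda>-\omega'\}$ with the analytic extension of the resolvent into a sector. The key estimate then measures the resolvent from $\mathscr X$ into $\mathcal D(\mathcal A)$:
\[
\|\mathcal A(\lambda-\mathcal A)^{-1}\|=\|\lambda(\lambda-\mathcal A)^{-1}-\Id\|\le 1+K|\lambda|/|\lambda+\omega'|.
\]
The right-hand side is bounded uniformly on $\Sigma$ away from a neighborhood of $-\omega'$, because there $|\lambda|/|\lambda+\omega'|$ is bounded. Hence $\|(\lambda-\mathcal A)^{-1}\|_{\mathcal L(\mathscr X,\mathcal D(\mathcal A))}\le K'$ on the region $\Sigma':=\Sigma\cap\{|\lambda+\omega'|\ge\omega'-\omega\}$, which contains the closed half-plane $\{\operatorname{Re}\lambda\ge-\omega\}$.

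Next comes the Neumann series. Set $\varepsilon_{\mathrm{pert}}(\omega):=1/(2K')$. For $\lambda\in\Sigma'$ one has $\|\mathcal B(\lambda-\mathcal A)^{-1}\|_{\mathcal L(\mathscr X)}\le 1/2$, so
\[
\lambda-\mathcal A-\mathcal B=(\Id-\mathcal B(\lambda-\mathcal A)^{-1})(\lambda-\mathcal A)
\]
is invertible on $\mathcal D(\mathcal A)$ and $\|(\lambda-\mathcal A-\mathcal B)^{-1}\|\le2\|(\lambda-\mathcal A)^{-1}\|$. This gives three consequences. First, on the part of $\Sigma'$ with $|\lambda|$ large, the bound $\le 2K/|\lambda+\omega'|\lesssim1/|\lambda|$ in a sector of half-angle exceeding $\pi/2$ shows that $\mathcal A+\mathcal B$ is sectorial with the same domain, hence generates an analytic semigroup. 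Closedness holds because the graph norms are equivalent, since $\|\mathcal B\|_{\mathcal L(\mathcal D(\mathcal A),\mathscr X)}<1/2$. Second, $\{\operatorname{Re}\lambda\ge-\omega\}\subset\rho(\mathcal A+\mathcal B)$, so the spectral bound $s(\mathcal A+\mathcal B)\le-\omega$ holds; in fact it is strictly smaller, since the half-plane is covered with an open margin.

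The final step uses the cited corollary: for analytic semigroups the growth bound equals the spectral bound. Therefore, for every $\omega''<\omega$ there is $M''$ with $\|\e^{t(\mathcal A+\mathcal B)}\|\le M''\e^{-\omega''t}$, which is the asserted decay rate arbitrarily below $\omega$. Alternatively, this can be done by hand by representing the semigroup through a contour integral on the boundary of $\Sigma'$ shifted to pass through $\operatorname{Re}\lambda=-\omega''$.

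The main obstacle is the uniform bound on $\|(\lambda-\mathcal A)^{-1}\|_{\mathcal L(\mathscr X,\mathcal D(\mathcal A))}$ over the whole unbounded region, including its uniformity near the imaginary axis and at large $|\lambda|$. This is exactly where the sector geometry and the shift $\omega'$ with $\omega<\omega'<\omega_{\mathcal A}$ must be chosen carefully, so that $\lambda=-\omega'$ stays a fixed distance away from the region. It is also why $\varepsilon_{\mathrm{pert}}$ depends on $\omega$.
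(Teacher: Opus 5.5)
Your proposal is correct and follows the same route as the paper. It bounds $(\lambda-\mathcal A)^{-1}$ uniformly in $\mathcal L(\mathscr X,\mathcal D(\mathcal A))$ over a region containing $\{\operatorname{Re}\lambda\ge-\omega\}$, uses a Neumann series to keep that region in $\rho(\mathcal A+\mathcal B)$, and concludes from the equality of spectral and growth bounds for analytic semigroups. The only difference is that your region is a shifted sector of half-angle exceeding $\pi/2$, so the same Neumann estimate also gives sectoriality of $\mathcal A+\mathcal B$ directly, whereas the paper works on the half-plane $H_\omega$ and invokes a separate relative perturbation theorem, with its own threshold $\varepsilon_{\mathrm{an}}$, for analyticity.
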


\begin{proof}
Fix \(0<\omega<\omega_{\mathcal A}\) and set
\[
 H_\omega=\{z\in\C:\operatorname{Re}z\ge-\omega\}.
\]
Since \(0<\omega<\omega_{\mathcal A}\), exponential stability gives \(H_\omega\subset\rho(\mathcal A)\).  On \(H_\omega\cap\{|z|\le R\}\), continuity of the resolvent and compactness give a uniform bound.  For sufficiently large \(|z|\) in \(H_\omega\), the standard resolvent estimate for analytic semigroups yields
\[
 \|(z-\mathcal A)^{-1}\|_{\mathcal L(\mathscr X)}
 \le \frac{C}{|z|}.
\]
Using
\[
 \mathcal A(z-\mathcal A)^{-1}
 =z(z-\mathcal A)^{-1}-\Id,
\]
we obtain the graph-resolvent bound
\[
 K_\omega:=\sup_{z\in H_\omega}
 \|(z-\mathcal A)^{-1}\|_{\mathcal L(\mathscr X,\mathcal D(\mathcal A))}<\infty.
\]
The standard relative perturbation theorem for analytic generators provides \(\varepsilon_{\mathrm{an}}>0\) such that graph-norm perturbations smaller than \(\varepsilon_{\mathrm{an}}\) preserve analyticity on \(\mathcal D(\mathcal A)\).  Choose
\[
 \varepsilon_{\mathrm{pert}}(\omega)
 <\min\left\{\varepsilon_{\mathrm{an}},\frac1{2K_\omega},\frac12\right\}.
\]
If \(\|\mathcal B\|_{\mathcal L(\mathcal D(\mathcal A),\mathscr X)}<\varepsilon_{\mathrm{pert}}(\omega)\), then
\[
 \|\mathcal B(z-\mathcal A)^{-1}\|_{\mathcal L(\mathscr X)}<\frac12,
 \qquad z\in H_\omega,
\]
so the Neumann formula
\[
 (z-\mathcal A-\mathcal B)^{-1}
 =(z-\mathcal A)^{-1}
 [\Id-\mathcal B(z-\mathcal A)^{-1}]^{-1}.
\]
shows that \(H_\omega\subset\rho(\mathcal A+\mathcal B)\).  In addition,
\[
 (1-\|\mathcal B\|)\|x\|_{\mathcal D(\mathcal A)}
 \le \|x\|_{\mathscr X}+\|(\mathcal A+\mathcal B)x\|_{\mathscr X}
 \le (1+\|\mathcal B\|)\|x\|_{\mathcal D(\mathcal A)},
\]
so \(\mathcal A+\mathcal B\) is closed on the same domain and generates an analytic semigroup.  Since
\(\sigma(\mathcal A+\mathcal B)\subset\{\operatorname{Re}z<-\omega\}\), the equality of the spectral and growth bounds for analytic semigroups \cite[Chapter~IV, Corollary~3.12]{EngelNagel2000} yields, for \(0<\omega'<\omega\),
\[
 \|\e^{t(\mathcal A+\mathcal B)}\|_{\mathcal L(\mathscr X)}
 \le C_{\omega'}\e^{-\omega't},
 \qquad t\ge0.
\]
\end{proof}

\end{document}